\documentclass[11pt]{article}
\usepackage[dvips]{epsfig}
\usepackage{amsmath,amssymb,euscript,graphicx,amsfonts,verbatim}
\usepackage{enumerate,color}
\usepackage{graphicx}
\usepackage{enumitem}
\usepackage{enumerate,color}
\usepackage{graphicx}
\usepackage{soul,xcolor}

\usepackage[normalem]{ulem}
\newcommand\redout{\bgroup\markoverwith
{\textcolor{red}{\rule[.5ex]{2pt}{0.4pt}}}\ULon}
\usepackage{xcolor}

\usepackage[colorlinks=true,linkcolor=blue,urlcolor=blue,citecolor=blue]{hyperref}

\textheight 224 true mm
\textwidth 163 true mm
\voffset=-15mm
\hoffset=-18mm

\newtheorem{theorem}{Theorem}[section]
\newtheorem{proposition}[theorem]{Proposition}
\newtheorem{lemma}[theorem]{Lemma}
\newtheorem{corollary}[theorem]{Corollary}

\newtheorem{definition}[theorem]{Definition}
\newtheorem{problem}[theorem]{Problem}

\newtheorem{remark}[theorem]{Remark}
\newtheorem{notation}[theorem]{Notation}

\newenvironment{proof}{{\noindent \sc Proof.}}{\hfill $\Qed$\\}

\newcommand{\ths}{\theta^*}

\newcommand{\Qed}{\rule{2.5mm}{3mm}}

\newcommand{\Ga}{\Gamma}

\newcommand{\RR}{\mathbb{R}}

\newcommand{\G}{\Gamma}

\newcommand{\Es}{E^*}

\DeclareMathOperator{\mat}{Mat}

\newcommand{\MX}{\mat_X(\RR)}
\newcommand{\e}{E^{*}}

\newcounter{case}

\renewcommand{\thecase}{\arabic{case}}

\newcounter{subcase}

\numberwithin{subcase}{case}

\begin{document}


\begin{center}
{\bf\Large On the $Q$-polynomial property of  bipartite graphs admitting a uniform structure } \\ [+4ex]
Blas Fernández{\small$^{a,b,c}$},  
Roghayeh Maleki{\small$^{d}$},  
\v Stefko Miklavi\v c{\small$^{a, b,c}$},
\\
Giusy Monzillo{\small$^{a, b}$} 
\\ [+2ex]
{\it \small 
$^a$University of Primorska, UP IAM, Muzejski trg 2, 6000 Koper, Slovenia\\
$^b$University of Primorska, UP FAMNIT, Glagolja\v ska 8, 6000 Koper, Slovenia\\
$^c$IMFM, Jadranska 19, 1000 Ljubljana, Slovenia}\\
$^d$Department of Mathematics and Statistics, University of Regina, Regina, SK S4S 0A2, Canada
\end{center}


\begin{abstract}
	
	Let $\Ga$ denote a finite, connected graph with vertex-set $X$. Fix $x \in X$ and let $\varepsilon \ge 3$ denote the eccentricity of $x$.  For mutually distinct scalars $\{\ths_i\}_{i=0}^\varepsilon$ define a diagonal matrix $A^*=A^*(\ths_0, \ths_1, \ldots, \ths_{\varepsilon}) \in \MX$ as follows: for $y \in X$ we let $(A^*)_{yy} = \ths_{\partial(x,y)}$, where $\partial$ denotes the shortest path length distance function of $\Ga$. We say that $A^*$ is a {\em dual adjacency matrix candidate of} $\G$ {\em with respect to} $x$ if the adjacency matrix $A \in \MX$ of $\G$ and $A^*$ satisfy
	$$
	  A^3 A^* - A^* A^3+(\beta+1)( A A^* A^2 - A^2 A^* A)= \gamma(A^2A^*-A^*A^2)+\rho( A A^* - A^* A)  
	$$
	for some scalars $\beta, \gamma, \rho\in \RR$. 
	
	Assume now that $\G$ is uniform with respect to $x$ in the sense of Terwilliger [Coding theory and design theory, {P}art {I}, IMA Vol. Math. Appl., {\bf 20}, 193-212 (1990)]. In this paper, we give sufficient conditions on the uniform structure of $\Ga$, such that $\Ga$ admits a dual adjacency matrix candidate with respect to $x$. As an application of our results, we show that the full bipartite graphs of dual polar graphs are $Q$-polynomial.

\end{abstract}
\begin{quotation}
\noindent {\em Keywords:} 
Uniform property, dual adjacency matrix, $Q$-polynomial property. 

\end{quotation}

\begin{quotation}
\noindent 
{\em Math. Subj. Class.:}  05E99, 05C50.
\end{quotation}


\section{Introduction}
\label{sec:intro}

The $Q$-polynomial property of distance-regular graphs (and association schemes in general)  was introduced by Delsarte in his seminal work on coding theory \cite{Del}, and extensively investigated thereafter \cite{bannai2021algebraic, bannai-ito, BCN, DKT, Terpart1, TerpartII, Terpart3}. Let $\Ga$ be a $Q$-polynomial distance-regular graph. It turns out that for each vertex $x$ of $\Ga$ there exists a certain diagonal matrix $A^{*}= A^{*}(x)$, known as the {\em dual adjacency matrix of} $\Ga$ {\em with respect to} $x$. The eigenspaces of $A^{*}$ are the subconstituents of $\Ga$ with respect to $x$. The adjacency matrix $A$ of $\Ga$ and the dual adjacency matrix $A^{*}$ are related as follows: the matrix $A$ acts on the eigenspaces of  $ A^{*}$ in a (block) tridiagonal fashion, and $A^*$ acts on the eigenspaces of $A$ likewise \cite[Section 13]{ter2}. Only recently,  Terwilliger \cite{projective} used this property of $Q$-polynomial distance-regular graphs to extend the $Q$-polynomial property to graphs that are not necessarily distance-regular.  To do this, he dropped the assumption that every vertex of $\Ga$ has a dual adjacency matrix (instead he required that $\Ga$ has a dual adjacency matrix with respect to one distinguished vertex).

Let $\Ga=(X,{\mathcal E})$ denote a connected graph with vertex set $X$ and edge set ${\mathcal E}$. Let $\MX$ denote the matrix algebra over the real numbers $\RR$, consisting of all matrices with entries in $\RR$, whose rows and columns are indexed by $X$. Fix a vertex $x$ of $\Ga$ and let $\varepsilon \ge 3$ denote the eccentricity of $x$. Assume that there exists a dual adjacency matrix $A^* \in \MX$ of $\G$ with respect to $x$ (see Section \ref{sec:ter} for details). As  $A^*$ generates the dual adjacency algebra of $\G$ with respect to $x$, there exist mutually distinct scalars $\{\ths_i\}_{i=0}^\varepsilon$ such that the following holds: for every vertex $y$ of $\G$, the $(y,y)$-entry of $A^*$ is equal to $\ths_i$ if and only if $\partial(x,y)=i$, where $\partial$ denotes the shortest path length distance function of $\Ga$. Inspired by this property of a dual adjacency matrix, we introduce the following definition: for a set of mutually distinct scalars $\{\ths_i\}_{i=0}^\varepsilon$, we define $A^*=A^*(\ths_0, \ths_1, \ldots, \ths_{\varepsilon})$ to be a diagonal matrix in $\MX$, whose $(y,y)$-entry is equal to $\ths_{\partial(x,y)}$ for $y \in X$. If one wants to show that such a matrix indeed is a dual adjacency matrix of $\G$ with respect to $x$, it is often very convenient to first show that the adjacency matrix $A$ of $\G$ and $A^*$ satisfy the following equality for some scalars $\beta, \gamma, \rho$:
	\begin{equation}
		\label{eq:tridiag}
	A^3 A^* - A^* A^3+(\beta+1)( A A^* A^2 - A^2 A^* A)= \gamma(A^2A^*-A^*A^2)+\rho( A A^* - A^* A).
	\end{equation}
We will therefore say that $A^*$ is  a {\em dual adjacency matrix candidate} if there exist scalars $\beta, \gamma, \rho$, such that \eqref{eq:tridiag} holds.

Assume now that $\G$ is bipartite and uniform with respect to $x$ in the sense  of Terwilliger \cite{OldTer}. The main goal of this paper is to give sufficient conditions  on the uniform structure of $\Ga$ such that $\Ga$ admits a dual adjacency matrix candidate with respect to $x$. We provide such sufficient conditions in Theorems \ref{thm5} and \ref{thm6}. As an application of our results, we show that the full bipartite graphs of dual polar graphs are $Q$-polynomial. 
 
 Our paper is organized as follows. Sections~\ref{sec:ter} through \ref{sec:uniform} provide the necessary background information and highlight some interesting preliminary results. In Section~\ref{sec:tridiag}, we present our main results, including Theorems \ref{thm5} and \ref{thm6}. Finally, using the results from Section~\ref{sec:tridiag}, in Section~\ref{sec:app} we discuss an application related to the $Q$-polynomial property for the full bipartite graphs of dual polar graphs.


\section{Terwilliger algebras and $Q$-polynomial property}
\label{sec:ter}

Throughout this paper, all graphs will be finite, simple, connected, and undirected. Let $\Ga=(X,\mathcal{E})$ denote a graph with vertex set $X$ and edge set $\mathcal{E}$. In this section, we recall the $Q$-polynomial property of $\Ga$ and the definition of a Terwilliger algebra. Let $\partial$ denote the shortest path length distance function of $\Ga$. The diameter $D$ of $\Ga$ is defined as $D=\max\{\partial(x,y) \mid x,y \in X \}$. Let $\MX$ denote the matrix algebra over the real numbers $\RR$, consisting of all matrices with entries in $\RR$, whose rows and columns are indexed by $X$. Let $I \in \MX$ denote the identity matrix. Let $V$ denotes the vector space over $\RR$ consisting of all column vectors with entries in $\RR$, whose coordinates are indexed by $X$. We observe that $\MX$ acts on $V$ by left multiplication. We call $V$ the \emph{standard module}. For any $y \in X$, let $\widehat{y}$ denote the element of $V$ with $1$ in the ${y}$-coordinate and $0$ in all other coordinates. We observe that $\{\widehat{y}\;|\;y \in X\}$ is a basis for $V$. 

\begin{definition}
	\label{def:wam}
	By a weighted adjacency matrix of $\Ga$ we mean a matrix $A \in \MX$, which has $(z,y)$-entry 
\begin{eqnarray*}
	(A)_{z y} = \left\{ \begin{array}{lll}
		\ne 0 & \hbox{if } \; \partial(z,y)=1, \\
		0 & \hbox{if } \; \partial(z,y) \ne 1 \end{array} \right. 
	\qquad (y,z \in X).
\end{eqnarray*}
\end{definition}
For the rest of this section, we fix a weighted adjacency matrix $A$ of $\Ga$ that is diagonalizable over $\RR$. Let $M$ denote the subalgebra of $\MX$ generated by $A$. The algebra $M$ is called the \emph{adjacency algebra} of the graph $\Gamma$, generated by $A$. Observe that $M$ is commutative. Let ${\cal D}+1$ denote the dimension of the vector space $M$. Since $A$ is diagonalizable, the vector space $M$ has a
basis $\{E_i\}_{i=0}^{\cal D}$ such that $\sum_{i=0}^{\cal D} E_i=I$ and $E_i E_j = \delta_{i,j} E_i$ for $0 \le i, j \le {\cal D}$. We call $\{E_i\}_{i=0}^{\cal D}$ \emph{the primitive idempotents of} $A$. Since $A \in M$, there exist real numbers $\{\theta_i\}_{i=0}^{\cal D}$ such that $A = \sum_{i=0}^{\cal D} \theta_i E_i$. The scalars $\{\theta_i\}_{i=0}^{\cal D}$ are mutually distinct since $A$ generates $M$. We have
$A E_i = \theta_i E_i = E_i A$ for $0 \le i \le {\cal D}$. Note that
$$
  V = \sum_{i=0}^{\cal D} E_i V \qquad \qquad \text{(direct sum)}.
$$
For $0 \le  i \le  {\cal D}$  the subspace $E_i V$ is an eigenspace of $A$, and $\theta_i$ is the corresponding eigenvalue. For notational convenience, we assume $E_{i} = 0$ for $i < 0$ and $i > {\cal D}$.

Next we discuss the dual adjacency algebra of $\Ga$. To do that, we fix a vertex $x \in X$ for the rest of this section. Let $\varepsilon=\varepsilon(x)$ denote the eccentricity of $x$, that is, $\varepsilon = \max \{\partial(x,y) \mid y \in X\}$. For $ 0 \le i \le \varepsilon$, let $E_i^*=E_i^*(x)$ denote the diagonal matrix in $\MX$ with $(y,y)$-entry given by
\begin{eqnarray*}
	\label{den0}
	(\e_i)_{y y} = \left\{ \begin{array}{lll}
		1 & \hbox{if } \; \partial(x,y)=i, \\
		0 & \hbox{if } \; \partial(x,y) \ne i \end{array} \right. 
	\qquad (y \in X).
\end{eqnarray*}
We call $\e_i$ the \emph{$i$-th dual idempotent} of $\Gamma$ with respect to $x$ \cite[p.~378]{Terpart1}. We observe 
(ei)  $\sum_{i=0}^\varepsilon E_i^*=I$; 
(eii) $E_i^{*\top} = E_i^*$ $(0 \le i \le \varepsilon)$; 
(eiii) $E_i^*E_j^* = \delta_{ij}E_i^* $ $(0 \le i,j \le \varepsilon)$.
It follows that $\{E_i^*\}_{i=0}^\varepsilon$ is a basis for a commutative subalgebra $M^*=M^*(x)$ of $\MX$. The algebra $M^*$ is called the \emph{dual adjacency algebra} of $\Gamma$ with respect to $x$ \cite[p.~378]{Terpart1}. Note that for $0 \le i \le \varepsilon$ we have
$\e_i V = {\rm Span} \{ \widehat{y} \mid y \in X, \partial(x,y)=i\}$, 
and  
\begin{equation}
	\label{vsub}
	V = \sum_{i=0}^{\varepsilon} E_i^* V \qquad \qquad {\rm (direct\ sum}). \nonumber 
\end{equation}
The subspace $\e_i V$ is known as the \emph{$i$-th subconstituent of $\Gamma$ with respect to $x$}. 
For convenience we set $\e_{i}=0$ for $i<0$ and $i>\varepsilon$.
By the triangle inequality, for adjacent $y, z \in X$ the distances $\partial(x, y)$ and $\partial(x, z)$
differ by at most one. Consequently,
$$
  AE^*_iV \subseteq E^*_{i-1} V + E^*_i V + E^*_{i+1} V  \qquad (0 \le i \le \varepsilon). 
$$
Next we discuss the $Q$-polynomial property.

\begin{definition}
	\label{def:q1}
	{\rm (See \cite[Definition 20.6, Definition 20.7 and Definition 20.8]{ter2}.)} 
	A matrix $A^* \in \MX$ is called a dual adjacency matrix of $\Ga$ (with respect to $x$ and the ordering 
	$\{E_i\}_{i=0}^{\cal D}$ of the primitive idempotents) whenever $A^*$ generates $M^*$ and
	$$
	  A^* E_i V \subseteq E_{i-1} V + E_i V + E_{i+1} V \qquad (0 \le i \le {\cal D}). 
	$$
	We say that the ordering $\{E_i\}_{i=0}^{\cal D}$ is $Q$-polynomial with respect to $x$ whenever there exists a dual adjacency matrix of $\Ga$ with respect to $x$ and $\{E_i\}_{i=0}^{\cal D}$. We say that $A$ is $Q$-polynomial with respect to $x$ whenever there exists an ordering of the primitive idempotents of $A$ that is $Q$-polynomial with respect to $x$.
	\end{definition}

Assume that $\Ga$ has a dual adjacency matrix $A^*$ with respect to $x$ and $\{E_i\}_{i=0}^{\cal D}$. Since $A^* \in M^*$, there exist real numbers $\{\ths_i\}_{i=0}^\varepsilon$ such that $A^* = \sum_{i=0}^\varepsilon \ths_i E^*_i$. The scalars $\{\ths_i\}_{i=0}^\varepsilon$ are mutually distinct since $A^*$ generates $M^*$. We have $A^* E^*_i = \ths_i E^*_i = E^*_i A^*$ for $0 \le i \le \varepsilon$. 
We mentioned earlier that the sum $V = \sum_{i=0}^\varepsilon E^*_i V$ is direct. For $0 
\le i \le \varepsilon$ the subspace $E^*_i V$ is an eigenspace of $A^*$, and $\ths_i$ is the corresponding eigenvalue. As we investigate the $Q$-polynomial property, it is helpful to bring in the Terwilliger algebra
\cite{Terpart1,TerpartII,Terpart3}. The following definition is a variation on \cite[Definition 3.3]{Terpart1}.

The \emph{Terwilliger algebra of $\Gamma$ with respect to $x$ and $A$}, denoted by $T=T(x,A)$,  is the  subalgebra of $\MX$ generated by $M$ and $M^*$. If $\Ga$ has a dual adjacency matrix $A^*$ with respect to $x$ and $\{E_i\}_{i=0}^{\cal D}$, then $T$ is generated by $A$ and $A^*$, see \cite[Lemma 2.6]{projective}. The following results will also be useful.

\begin{lemma}
	\label{lem:products}
	{\rm (See \cite[Lemma 2.7]{projective}.)}
	We have $E^*_i  A E^*_j = 0$  if $|i - j| > 1 \; (0 \le i, j \le \varepsilon)$. 
	Assume that $\Ga$ has a dual adjacency matrix $A^*$ with respect to $x$ and $\{E_i\}_{i=0}^{\cal D}$. Then, $E_i A^* E_j = 0$ if $|i - j| > 1 \;
(0 \le i, j \le {\cal D})$.
\end{lemma}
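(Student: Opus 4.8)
The plan is to establish the two displayed vanishing statements separately, since the first concerns the diagonal dual idempotents together with the combinatorial adjacency matrix, while the second is its spectral dual.

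For the first claim, I would argue entrywise. Fix $i,j$ with $|i-j|>1$ and pick $y,z \in X$. Since $E^*_i$ and $E^*_j$ are diagonal, the $(z,y)$-entry of $E^*_i A E^*_j$ equals $(E^*_i)_{zz}\,(A)_{zy}\,(E^*_j)_{yy}$. This product can be nonzero only when $\partial(x,z)=i$, $\partial(z,y)=1$, and $\partial(x,y)=j$ hold simultaneously, because $A$ is a weighted adjacency matrix and the $E^*_i$ record distance from $x$. But then the triangle inequality forces $|i-j| = |\partial(x,z)-\partial(x,y)| \le \partial(z,y) = 1$, contradicting $|i-j|>1$. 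Hence every entry of $E^*_i A E^*_j$ vanishes, as claimed.

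For the second claim, I would instead work with the action on the standard module $V$ and exploit the defining containment of a dual adjacency matrix. By Definition~\ref{def:q1}, we have $A^* E_j V \subseteq E_{j-1}V + E_j V + E_{j+1}V$. Applying $E_i$ and using the orthogonality relations $E_i E_k = \delta_{ik} E_i$, we obtain $E_i A^* E_j V \subseteq E_i E_{j-1}V + E_i E_j V + E_i E_{j+1}V$. When $|i-j|>1$ the index $i$ lies outside $\{j-1,\,j,\,j+1\}$, so each of the three summands is zero; thus $E_i A^* E_j V = 0$ and therefore $E_i A^* E_j = 0$.

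Neither step presents a genuine obstacle; both are short consequences of the relevant structure --- the triangle inequality on one side, and idempotent orthogonality together with the $Q$-polynomial containment on the other. The only point requiring care is to keep the two arguments conceptually distinct: the first uses that $A$ is a weighted adjacency matrix and that the $E^*_i$ encode distance from $x$, whereas the second uses the spectral decomposition $A = \sum_i \theta_i E_i$ and the hypothesis that $A^*$ is a dual adjacency matrix. Recognizing this duality is what makes both halves transparent, and it is precisely the symmetry between $M$ and $M^*$ that the Terwilliger algebra formalism is designed to exploit.
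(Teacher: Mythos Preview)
Your proof is correct. Note, however, that the paper does not supply its own proof of this lemma; it is stated with a citation to \cite[Lemma~2.7]{projective} and no proof environment follows. Your argument is precisely the standard one: the first half is an entrywise computation reducing to the triangle inequality, and the second half applies the defining containment $A^* E_j V \subseteq E_{j-1}V + E_jV + E_{j+1}V$ from Definition~\ref{def:q1} together with the idempotent orthogonality $E_iE_k = \delta_{ik}E_i$, then uses that $MV=0$ forces $M=0$ since $V$ is the full column space. There is nothing to add or correct.
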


By a \emph{$T$-module} we mean a subspace $W$ of $V$ such that $BW \subseteq W$ for every $B \in T$. Let $W$ denote a $T$-module. Then, $W$ is said to be {\em irreducible} whenever it is nonzero and contains no submodules other than $0$ and $W$.

Let $W$ denote an irreducible $T$-module. Observe that $W$ is a direct sum of the non-vanishing subspaces $E_i^*W$ for $0 \leq i \leq \varepsilon$. The \emph{endpoint} of $W$ is defined as $r:=r(W)=\min \{i \mid 0 \le i\le \varepsilon, \; \e_i W \ne 0 \}$, and the \emph{diameter} of $W$ as $d:=d(W)=\left|\{i \mid 0 \le i\le \varepsilon, \; \e_i W \ne 0 \} \right|-1 $. It turns out that $\e_iW \neq 0$ if and only if $r \leq i \leq r+d$ $(0 \leq i \leq \varepsilon)$, see \cite[Lemma 2.9]{projective}. The module $W$ is said to be \emph{thin} whenever $\dim(E^*_iW)\leq1$ for $0 \leq i \leq \varepsilon$. Two $T$-modules $W$ and $W^{\prime}$ are said to be  \emph{$T$-isomorphic} (or simply \emph{isomorphic}) whenever there exists a vector space isomorphism $\sigma: W \rightarrow W^{\prime}$ such that $\left( \sigma B - B\sigma \right) W=0$ for all $B \in T$. 


\section{A tridiagonal relation}

Recall our graph $\G=(X,\mathcal{E})$ from Section \ref{sec:ter}. To keep things simple we will assume for the rest of this paper that $A \in \MX$ is the usual adjacency matrix of $\G$:

\begin{eqnarray*}
	(A)_{z y} = \left\{ \begin{array}{lll}
		1 & \hbox{if } \; \partial(z,y)=1, \\
		0 & \hbox{if } \; \partial(z,y) \ne 1 \end{array} \right. 
	\qquad (y,z \in X).
\end{eqnarray*}

 Fix a vertex $x$ of $\Ga$ and let $\varepsilon \ge 3$ denote the eccentricity of $x$. Recall that if  there exists a dual adjacency matrix $A^* =A^*(x) \in \MX$ of $\G$ with respect to $x$, then there exist mutually distinct scalars $\{\ths_i\}_{i=0}^\varepsilon$ such that $A^* = \sum_{i=0}^\varepsilon \ths_i E^*_i$,  where $\Es_i \; (0 \le i \le \varepsilon)$ are the dual idempotents of $\G$ with respect to $x$. Motivated by this observation we introduce the following definition. 
  
  \begin{definition}
  	\label{def:diag}
  	For mutually distinct scalars $\{\ths_i\}_{i=0}^\varepsilon$ we let  $A^*=A^*(\ths_0, \ths_1, \ldots, \ths_{\varepsilon})$ to be a diagonal matrix in $\MX$ defined by
  	$$
  	  (A^*)_{yy} = \ths_i \; \hbox{ if and only if } \; \partial(x,y)=i \qquad (y \in X).
  	$$
  \end{definition}
Note that such a matrix by definition generates the dual adjacency algebra $M^*$ of $\Gamma$ with respect to $x$. Let $A^*$ be as in Definition \ref{def:diag}. It turns out that if one wants to show that $A^*$ is an actual dual adjacency matrix of $\Ga$, then it is usually helpful to show that $A, A^*$ satisfy 
\begin{equation}
	\label{3diag}
	A^3 A^* - A^* A^3+(\beta+1)( A A^* A^2 - A^2 A^* A)= \gamma(A^2A^*-A^*A^2)+\rho( A A^* - A^* A)
\end{equation}
for certain scalars $\beta, \gamma, \rho\in \RR$. Therefore, we will say that $A^*$ is a {\em dual adjacency matrix candidate with respect to} $x$, if there exist $\beta, \gamma, \rho\in \RR$, such that \eqref{3diag} is satisfied.  In this case, we refer to scalars  $\beta, \gamma, \rho$ as the {\em corresponding parameters} of the dual adjacency matrix candidate. We finish this section with some rather obvious results.

\begin{lemma}
	\label{lem:entries}
	Let $A^*$ be as in Definition \ref{def:diag}, and pick $y,z \in X$. Then, the following (i)--(iv) hold.
	\begin{itemize}
		\item[(i)] 
		$( A^3 A^* - A^* A^3)_{zy}=\gamma_3(y,z)(\theta^*_{\partial(x,y)}-\theta^*_{\partial(x,z)})$, where $\gamma_3(y,z)$ denotes the number of walks of length $3$ between $y$ and $z$.
		\item[(ii)] 
		$( A A^* A^2- A^2 A^* A)_{zy}=\displaystyle \sum_{[ y,v,w,z]} ( \theta^*_{\partial(x,w)}-\theta^*_{\partial(x,v)})$, where the sum is over all walks $[ y,v,w,z]$ between $y$ and $z$.
		\item[(iii)] 
		$( A^2 A^* - A^* A^2)_{zy}=\gamma_2(y,z)(\theta^*_{\partial(x,y)}-\theta^*_{\partial(x,z)})$, where $\gamma_2(y,z)$ denotes the number of walks of length $2$ between $y$ and $z$.
		\item[(iv)] 
		$( A A^* - A^* A)_{zy}$ is $ ( \theta^*_{\partial(x,y)}-\theta^*_{\partial(x,z)})$ if $\partial(z,y)=1$, and $0$ otherwise. 
	\end{itemize}
\end{lemma}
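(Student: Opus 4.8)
The plan is to compute each $(z,y)$-entry directly from the definition of matrix multiplication, exploiting two elementary facts. First, since $A$ is the $0/1$ adjacency matrix and is symmetric, the entry $(A^k)_{zy}$ equals the number of walks of length $k$ between $y$ and $z$; in particular $(A^3)_{zy}=\gamma_3(y,z)$, $(A^2)_{zy}=\gamma_2(y,z)$, and $(A)_{zy}$ equals $1$ when $\partial(z,y)=1$ and $0$ otherwise. Second, since $A^*$ is diagonal with $(A^*)_{ww}=\theta^*_{\partial(x,w)}$, inserting $A^*$ into a product of powers of $A$ merely pins down the intermediate vertex and multiplies by its weight: for any $M,N\in\MX$ one has $(MA^*N)_{zy}=\sum_{w}(M)_{zw}\,\theta^*_{\partial(x,w)}\,(N)_{wy}$, which I would record as the single auxiliary identity driving the whole proof.

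For parts (i), (iii), and (iv), the matrix $A^*$ sits at one end of the product. Multiplying on the right by $A^*$ selects the column index, so $(A^k A^*)_{zy}=(A^k)_{zy}\,\theta^*_{\partial(x,y)}$, while multiplying on the left selects the row index, so $(A^* A^k)_{zy}=\theta^*_{\partial(x,z)}\,(A^k)_{zy}$. Subtracting yields $(A^kA^*-A^*A^k)_{zy}=(A^k)_{zy}\big(\theta^*_{\partial(x,y)}-\theta^*_{\partial(x,z)}\big)$. Specializing $k=3,2,1$ and substituting the walk-count interpretation of $(A^k)_{zy}$ produces (i), (iii), and (iv) respectively, the last because $(A)_{zy}$ is nonzero precisely when $\partial(z,y)=1$.

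For part (ii), I would apply the auxiliary identity with $M=A$, $N=A^2$ to obtain $(AA^*A^2)_{zy}=\sum_w (A)_{zw}\,(A^2)_{wy}\,\theta^*_{\partial(x,w)}$; each nonzero summand corresponds to an edge $z\sim w$ together with a length-$2$ walk $w\sim v\sim y$, that is, to a walk $[y,v,w,z]$ weighted by $\theta^*_{\partial(x,w)}$, where $w$ is the interior vertex adjacent to $z$. Likewise, taking $M=A^2$, $N=A$ gives $(A^2A^*A)_{zy}=\sum_v (A^2)_{zv}\,(A)_{vy}\,\theta^*_{\partial(x,v)}$, where now the weighted interior vertex $v$ is the one adjacent to $y$ in the very same walk $[y,v,w,z]$. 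Since both expressions range over the identical set of walks $[y,v,w,z]$ from $y$ to $z$, subtracting term by term collapses to $\sum_{[y,v,w,z]}\big(\theta^*_{\partial(x,w)}-\theta^*_{\partial(x,v)}\big)$, as claimed.

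The computation is entirely routine and I anticipate no genuine obstacle; the only point demanding care is the bookkeeping in (ii)—correctly identifying which of the two interior vertices of the walk $[y,v,w,z]$ receives the weight in $AA^*A^2$ versus $A^2A^*A$ (the vertex adjacent to $z$ in the former, the vertex adjacent to $y$ in the latter), so that the two sums are indexed by the same set of walks and their difference telescopes cleanly into the stated formula.
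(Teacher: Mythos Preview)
Your proposal is correct and follows exactly the approach indicated in the paper, which simply says to use elementary matrix multiplication, Definition~\ref{def:diag}, and the walk-counting interpretation of $(A^k)_{zy}$. You have merely written out in detail what the paper leaves as a one-line remark; the bookkeeping in part~(ii) is handled correctly.
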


\begin{proof}
	Use elementary matrix multiplication, Definition \ref{def:diag}, and the fact that the $(v,w)$-entry of $A^i$ is equal to the number of walks of length $i$ between $v$ and $w$.
\end{proof}

\begin{theorem}
	\label{thm:gamma}
	Let $A^*$ be a dual adjacency matrix candidate of $\G$ with respect to $x$, and let $\beta, \gamma, \rho$ be the  corresponding parameters. If $\Ga$ is bipartite, then $\gamma=0$.
\end{theorem}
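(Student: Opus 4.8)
The plan is to exploit the bipartite structure through a parity argument applied to the entries of the defining relation \eqref{3diag}. Since $\Ga$ is bipartite, its vertex set $X$ splits into two color classes, and for any two vertices $y,z$ the parity of every walk length between them is fixed and equal to the parity of $\partial(y,z)$. Consequently, $\gamma_2(y,z)=0$ whenever $\partial(y,z)$ is odd, and $\gamma_3(y,z)=0$ whenever $\partial(y,z)$ is even. My strategy is to evaluate both sides of \eqref{3diag} at a carefully chosen pair $(z,y)$ for which the contributions multiplying $\beta+1$ and $\rho$ vanish, so that the $\gamma$-term is isolated and can be forced to be zero.

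First I would read off the $(z,y)$-entry of \eqref{3diag} using Lemma~\ref{lem:entries}, obtaining
\begin{align*}
\gamma_3(y,z)(\ths_{\partial(x,y)}-\ths_{\partial(x,z)}) &+ (\beta+1)\sum_{[y,v,w,z]}(\ths_{\partial(x,w)}-\ths_{\partial(x,v)}) \\
&= \gamma\,\gamma_2(y,z)(\ths_{\partial(x,y)}-\ths_{\partial(x,z)}) + \rho\,(AA^*-A^*A)_{zy}.
\end{align*}
The key observation is that I should pick $y,z$ with $\partial(y,z)=2$ (so they lie in the same color class and there is at least one length-$2$ walk between them, making $\gamma_2(y,z)\neq 0$), while simultaneously arranging that the two problematic terms disappear. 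Because $\partial(y,z)=2$ is even, every walk between $y$ and $z$ has even length; hence $\gamma_3(y,z)=0$, killing the first term on the left. Likewise the $\rho$-term involves $(AA^*-A^*A)_{zy}$, which by Lemma~\ref{lem:entries}(iv) is nonzero only when $\partial(z,y)=1$; since $\partial(y,z)=2$, this term vanishes as well.

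The remaining obstacle is the $(\beta+1)$-term: for a pair at distance $2$ there genuinely exist length-$3$ walks of the form $[y,v,w,z]$, so I cannot dismiss this sum by parity alone. Here I would use the structural fact that such a walk must use an edge twice (a length-$3$ walk between vertices at distance $2$ necessarily backtracks), and pair off the walks $[y,v,w,z]$ to show the signed sum $\sum(\ths_{\partial(x,w)}-\ths_{\partial(x,v)})$ cancels in pairs. Concretely, for each intermediate walk one can find a conjugate walk contributing the opposite sign, so the entire sum is zero; alternatively, one observes that in the bipartite case the operator identity forces $AA^*A^2 - A^2A^*A$ to annihilate this entry by the same color-class parity applied to the inner vertices $v,w$. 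Once both the first and third terms on the left and the $\rho$-term on the right are seen to vanish, the entry reduces to $0 = \gamma\,\gamma_2(y,z)(\ths_{\partial(x,y)}-\ths_{\partial(x,z)})$. Choosing $y,z$ so that additionally $\partial(x,y)\neq\partial(x,z)$ — which is possible because $\varepsilon\ge 3$ guarantees enough room in the distance partition — makes the factor $\ths_{\partial(x,y)}-\ths_{\partial(x,z)}$ nonzero (the $\ths_i$ are mutually distinct), and since $\gamma_2(y,z)\neq 0$ we conclude $\gamma=0$.

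The step I expect to be the main obstacle is showing that the $(\beta+1)$-term vanishes for the chosen pair; establishing the correct pairing of backtracking length-$3$ walks, or finding the cleanest parity/bipartiteness argument that annihilates $\sum_{[y,v,w,z]}(\ths_{\partial(x,w)}-\ths_{\partial(x,v)})$, is where the real care is needed. The rest is a matter of verifying that a witnessing pair $(y,z)$ with $\partial(y,z)=2$ and $\partial(x,y)\neq\partial(x,z)$ actually exists, which should follow readily from $\varepsilon\ge 3$ and connectivity.
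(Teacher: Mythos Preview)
Your approach is the same as the paper's, but you introduce an obstacle that does not exist and then spend effort trying to overcome it. You correctly observe that in a bipartite graph any walk between vertices at even distance has even length, so $\gamma_3(y,z)=0$ when $\partial(y,z)=2$. But $\gamma_3(y,z)$ \emph{is} the number of walks $[y,v,w,z]$ of length $3$ between $y$ and $z$; the sum in Lemma~\ref{lem:entries}(ii) is taken over exactly this set. Hence the $(\beta+1)$-term vanishes for the same reason the $A^3A^*-A^*A^3$ term does: the indexing set is empty. Your claim that ``for a pair at distance $2$ there genuinely exist length-$3$ walks'' directly contradicts your own earlier sentence, and the subsequent discussion of backtracking walks and pairings is unnecessary (and, in fact, impossible to carry out, since there are no such walks to pair).

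Once this is corrected, what remains is precisely the paper's argument: pick $y,z$ with $\partial(y,z)=2$ and $\partial(x,y)=\partial(x,z)+2$ (which exists since $\varepsilon\ge 3$); then all terms in \eqref{3diag} vanish at the $(z,y)$-entry except $\gamma\,\gamma_2(y,z)(\ths_{\partial(x,y)}-\ths_{\partial(x,z)})$, and since $\gamma_2(y,z)\ne 0$ and the $\ths_i$ are distinct, $\gamma=0$.
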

\begin{proof}
	Assume that $\Gamma$ is bipartite. Pick $y,z \in X$ such that $\partial(y,z)=2$ and $\partial(x,y)=\partial(x,z)+2$. Note that as $\varepsilon \ge 3$ such vertices $y,z$ exist. Observe that since $\G$ is bipartite, there are no walks of length $3$ between $y$ and $z$. Therefore, it follows from \eqref{3diag} and Lemma \ref{lem:entries} that 
	$$
	  \gamma \gamma_2(y,z) (\ths_{\partial(x,y)}-\ths_{\partial(x,z)})=0,
	$$
	where $\gamma_2(y,z)$ denotes the number of walks of length $2$ between $y$ and $z$. Note that $\gamma_2(y,z) \ne 0$ as $\partial(y,z)=2$. Note also that $\ths_{\partial(x,y)}-\ths_{\partial(x,z)} \ne 0$ as $\{\ths_i\}_{i=0}^{\varepsilon}$ are mutually distinct. It follows that $\gamma=0$.
\end{proof}


\section{The lowering, flat, and raising matrices}
\label{sec:lfr}

Recall our graph $\Ga=(X, {\cal E})$ from Section \ref{sec:ter}. Fix a vertex $x$ of $\Ga$,  and let $\varepsilon \ge 3$ denote the eccentricity of $x$. Let $\e_i \; (0 \le i \le \varepsilon)$ be the dual idempotents with respect to $x$, and let $T=T(x,A)$ denote the corresponding Terwilliger algebra. We now recall certain matrices in $T$.

\begin{definition} \label{def2} 
Considering the above mentioned notation, the matrices $L=L(x)$, $F=F(x)$, and $R=R(x)$ are defined as follows.
	\begin{eqnarray}\label{defLR}
		L=\sum_{i=1}^{\varepsilon}E^*_{i-1}AE^*_i, \hspace{1cm}
		F=\sum_{i=0}^{\varepsilon}E^*_{i}AE^*_i, \hspace{1cm}
		R=\sum_{i=0}^{\varepsilon-1}E^*_{i+1}AE^*_i. \nonumber 
	\end{eqnarray}
	We refer to $L$, $F$, and $R$ as the lowering, the flat, and the raising matrix with respect to $x$, respectively.
\end{definition}
Note that, by definition, $L, F, R \in T$, $F=F^{\top}$, $R=L^{\top}$, and $A=L+F+R$.
Observe that for $y,z \in X$ the $(z,y)$-entry of $L$ equals $1$ if $\partial(z,y)=1$ and $\partial(x,z)= \partial(x,y)-1$, and $0$ otherwise. The $(z,y)$-entry of  $F$ is equal to $1$ if $\partial(z,y)=1$ and $\partial(x,z)= \partial(x,y)$, and $0$ otherwise. Similarly, the $(z,y)$-entry of $R$ equals $1$ if $\partial(z,y)=1$ and $\partial(x,z)= \partial(x,y)+1$, and $0$ otherwise. Consequently, we have
\begin{equation}
	\label{eq:LRaction}
	L \e_i V \subseteq \e_{i-1} V, \qquad  F \e_i V \subseteq \e_{i} V, \qquad R \e_i V \subseteq \e_{i+1} V.
\end{equation}
Observe also that $\Gamma$ is bipartite if and only if $F=0$. We now recall a connection between matrices $R,F,L$ and certain walks in $\Ga$. The concept of the shape of a walk with respect to $x$ was introduced in~\cite{FerMik}. Now, we recall the definition of this concept together with some immediate consequences.
\begin{definition}\label{def3}
	Let $\Gamma = (X , \mathcal{E})$ be a graph and fix a vertex $x$ of $\Ga$. Pick $y, z \in  X$, and let
	$P = [y = x_0, x_1, . . . , x_j = z]$ denote a $yz$-walk.  The shape of $P$ with respect to $x$ is a sequence of symbols $t_1 t_2  \ldots t_j$, where for $1 \leq i \leq j$ we have $t_i \in \{\ell, f, r\}$, and such that $t_i = r$ if $\partial(x,x_i) = \partial(x,x_{i-1}) + 1$,  $t_i = f$ if $\partial(x,x_i) = \partial(x,x_{i-1}) $, and $t_i =  \ell$ if $\partial(x,x_i)  = \partial(x,x_{i-1})-1$.
	The number of $yz$-walks of the shape $t_1t_2... t_j $ with respect to $x$ will be denoted as $t_1t_2... t_j(y,z)$. We will use exponential notation for the shapes containing several consecutive identical symbols.
\end{definition}

The following Lemma is a consequence of elementary matrix multiplication and comments below the Definition \ref{def2} (see also \cite[Lemma 4.2]{FerMik}).

\begin{lemma}
	\label{lem:walks}
	With reference to the notation above, pick $y,z\in X$. Then, the following hold for a positive integer $m$.
	\begin{enumerate}[label=(\roman*),]
		\item The $(z,y)$-entry of $L^m$ is equal to the number $\ell^m(y,z)$ with respect to $x$.
		\item The $(z,y)$-entry of $L^mR$ is equal to the number $r\ell^m(y,z)$ with respect to $x$.
		\item The $(z,y)$-entry of $RL^m$ is equal to the number $\ell^m r(y,z)$ with respect to $x$.
		\item The $(z,y)$-entry of $LRL$ is equal to the number $\ell r\ell(y,z)$ with respect to $x$.
	\end{enumerate}
\end{lemma}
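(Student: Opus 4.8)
The plan is to prove Lemma~\ref{lem:walks} by reducing each matrix product to a sum over intermediate vertices and identifying the surviving terms with walks of a prescribed shape. Throughout I would use the three defining facts about $L$, $F$, $R$ established below Definition~\ref{def2}: namely that for adjacent $y,z$ the $(z,y)$-entry of $L$ (respectively $F$, $R$) is $1$ precisely when $\partial(x,z)=\partial(x,y)-1$ (respectively $=\partial(x,y)$, $=\partial(x,y)+1$), and $0$ otherwise. The key observation is that these three $0/1$ matrices are exactly the indicator matrices of single steps of shape $\ell$, $f$, and $r$, so that a length-$m$ walk's shape is read off step by step from a product of $m$ such matrices.

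For part (i), I would argue by induction on $m$. The base case $m=1$ is immediate from the description of the entries of $L$: the $(z,y)$-entry of $L$ counts the (at most one) walk $[y,z]$ of shape $\ell$. For the inductive step, I would expand $(L^{m})_{zy}=\sum_{w\in X}(L)_{zw}(L^{m-1})_{wy}$. By the inductive hypothesis $(L^{m-1})_{wy}=\ell^{m-1}(y,w)$, and $(L)_{zw}=1$ exactly when $[w,z]$ is a single $\ell$-step; hence each nonzero summand corresponds to prepending a final $\ell$-step to an $\ell^{m-1}$-walk from $y$ to $w$, and summing over $w$ counts precisely the $\ell^{m}$-walks from $y$ to $z$. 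The same bookkeeping handles parts (ii) and (iii): for $L^{m}R$ I would write $(L^{m}R)_{zy}=\sum_{w}(L^{m})_{zw}(R)_{wy}$, where $(R)_{wy}=1$ detects an initial $r$-step $[y,w]$, and $(L^{m})_{zw}=\ell^{m}(w,z)$ supplies the trailing $\ell^{m}$-portion, so the product counts walks of shape $r\ell^{m}$; the product $RL^{m}$ is handled symmetrically, placing the single $r$-step last and yielding shape $\ell^{m}r$.

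For part (iv) I would expand the triple product directly, inserting one intermediate vertex for each factor: $(LRL)_{zy}=\sum_{u,w}(L)_{zw}(R)_{wu}(L)_{uy}$. Reading the three factors right to left along the walk $[y=x_0,\,x_1,\,x_2,\,x_3=z]$, the factor $(L)_{uy}$ forces the first step $[y,u]$ to have shape $\ell$, the factor $(R)_{wu}$ forces the middle step to have shape $r$, and $(L)_{zw}$ forces the last step to have shape $\ell$; summing over the intermediate vertices $u,w$ therefore counts exactly the $yz$-walks of shape $\ell r\ell$, which is the quantity $\ell r\ell(y,z)$.

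I do not expect any serious obstacle here, as the statement is essentially a translation of matrix multiplication into combinatorial language; the only point requiring care is the orientation convention. Because the $(z,y)$-entry is indexed with the target $z$ first and the source $y$ second, one must consistently read each product from right to left as the walk proceeds from $y$ to $z$, and correspondingly identify which factor in $L^{m}R$ versus $RL^{m}$ supplies the single raising step at the beginning versus the end of the walk. Once this indexing is fixed uniformly with the shape convention of Definition~\ref{def3}, every claim follows from the inductive computation above together with the entrywise descriptions of $L$ and $R$.
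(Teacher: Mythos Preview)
Your proposal is correct and follows essentially the same approach as the paper, which simply records that the lemma is a consequence of elementary matrix multiplication together with the entrywise descriptions of $L$, $F$, $R$ given below Definition~\ref{def2}. You have merely spelled out those details, including the orientation convention, which the paper leaves implicit.
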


\section{The uniform structure of a bipartite graph}
\label{sec:uniform}

For the rest of this paper assume that our graph $\Ga$ is bipartite. In this section, we discuss the uniform property of bipartite graphs. The uniform property was first defined for graded partially ordered sets \cite{OldTer}. The definition was later extended to bipartite distance-regular graphs in \cite{MikTer} and then to an arbitrary bipartite graph in \cite{FMMM}. For the ease of reference we recall the definition. Let $\Gamma=(X, \mathcal{R})$ denote a bipartite graph and let $V$ denote the standard module of $\Ga$. Fix $x\in X$, and let $\varepsilon \ge 3$ denote the eccentricity of $x$. Let $T$, $L$, and $R$ denote the corresponding Terwilliger algebra, lowering, and raising matrix, respectively.
\begin{definition}\label{3Diag}
	A \emph{parameter matrix} $U=(e_{ij})_{1\leq i,j\leq \varepsilon}$ is defined to be a tridiagonal matrix with entries in $\RR$, satisfying the following properties:
	\begin{enumerate}[label=(\roman*)]
		\item $e_{ii}=1$ $(1\leq i\leq \varepsilon)$,
		\item $e_{i, i-1}\neq0$ for $2 \le i \le \varepsilon$ or $e_{i-1,  i}\neq0$ for $2\leq i\leq \varepsilon$, and
		\item  the principal submatrix $(e_{ij})_{s\leq i, \, j\leq t}$ is nonsingular for $1\leq s\leq t\leq \varepsilon$.
	\end{enumerate}
	For convenience we write $e^{-}_i:=e_{i, i-1}$ for $2\leq i\leq \varepsilon$ and $e^{+}_i:=e_{i, i+1}$ for $1\leq i\leq \varepsilon-1$. We also define $e^{-}_1:=0$ and $e^{+}_\varepsilon:=0$. 
\end{definition}

Let $U=(e_{ij})_{1\leq i,j\leq \varepsilon}$ be as in Definition \ref{3Diag}. Observe that since the principal submatrix $(e_{ij})_{s\leq i, \, j\leq s+1}$ is nonsingular for $1\leq s\leq \varepsilon-1$, we have that $e_{i+1}^- e_i^+ \ne 1$ for $1 \le i \le \varepsilon-1$. A \emph{uniform structure} of $\Gamma$ with respect to $x$ is a  pair $(U,f)$ where $f=\{f_i\}_{i=1}^\varepsilon$ is a vector in $\RR^\varepsilon$, such that
\begin{align}\label{uniformeq}
	e^{-}_iRL^2+LRL+ e^+_i L^2R=f_iL
\end{align}
is satisfied on $E^*_iV$ for $1\leq i\leq \varepsilon$, where $\e_i\in T$ are the dual idempotents of $\Gamma$ with respect to $x$. If the vertex $x$ is clear from the context, we will simply use \emph{uniform structure} of $\Gamma$ instead of  \emph{uniform structure of $\Gamma$ with respect to $x$}. 
The following result by Terwilliger plays an important role in the rest of this paper. 

\begin{theorem}[{\cite[Theorem 2.5]{OldTer}}]\label{oldpaper}
	Let $\Gamma=(X, \mathcal{R})$ denote a bipartite graph and fix $x\in X$. Let $T=T(x)$ denote the corresponding Terwilliger algebra. Assume that $\Gamma$ admits a uniform structure $(U,f)$ with respect to $x$. Then, the following (i)--(iii) hold:
	\begin{enumerate}[label=(\roman*),]
		\item Every irreducible $T$-module is thin. 
		\item Let $W$ denote an irreducible $T$-module with endpoint $r$ and diameter $d$. Then, the isomorphism class of $W$ is determined by $r$ and $d$. 
		\item[(iii)]  $W$ has a basis $\{w_i\}_{i=r}^{r+d}$ such that the following hold:
		\begin{itemize}
			\item[a)] $w_i \in \e_i W \; (r \le i \le r+d)$; 
			\item[b)] $Lw_r=0$ and $ L w_{r+i} = w_{r+i-1} \; (1\le i \le d)$;
			\item[c)] There exist scalars $x_{r+i} \; (1 \le i \le d)$ such that $Rw_{r+i-1} = x_{r+i} w_{r+i}$, and $R w_{r+d}=0$. Moreover, $\{x_{r+i}\}_{i=1}^{d}$  is the solution to the linear system 
			\begin{equation}
				\label{eq:linsys}
				U(r,d) 
				\begin{pmatrix}
					x_{r+1} \\
					 x_{r+2}\\
					 \vdots \\
					x_{r+d}
				\end{pmatrix}
				=
				\begin{pmatrix}
					f_{r+1} \\
					f_{r+2}\\
					\vdots \\
					f_{r+d}
				\end{pmatrix},
			\end{equation}
			where $U(r,d) = (e_{ij})_{r+1\leq i, \, j\leq d}$.
		\end{itemize}
	\end{enumerate}
\end{theorem}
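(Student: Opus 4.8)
The plan is to fix an irreducible $T$-module $W$ and analyze it entirely through the lowering and raising matrices $L,R$ of Definition~\ref{def2}, exploiting that $\Ga$ is bipartite (so $F=0$, $A=L+R$, and $R=L^{\top}$) and that $T$ is generated by $L$, $R$ and the dual idempotents. Write $r=r(W)$ and $d=d(W)$, so that $E^*_iW\neq 0$ exactly for $r\le i\le r+d$, with $LE^*_rW=0$ and $RE^*_{r+d}W=0$ by \eqref{eq:LRaction}. All three conclusions follow once I produce, from a single bottom vector, a basis of $W$ on which $L$ and $R$ act as in (iii): thinness (i) is then immediate, and the isomorphism statement (ii) follows because the action in that basis will depend only on $r$ and $d$.

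For the construction I first note that, since $R=L^{\top}$, the operator $LR$ restricted to the bottom space $E^*_rW$ is symmetric and positive semidefinite, so I may choose a nonzero eigenvector $w_r\in E^*_rW$, say $LR\,w_r=\lambda w_r$. I then take $w_{r+i}$ to be the appropriate scalar multiple of $R^iw_r$ and claim $\{w_r,\dots,w_{r+d}\}$ is a basis realizing (iii). The engine is the uniform relation \eqref{uniformeq}: restricted to $E^*_iW$ it reads $e^-_iRL^2+LRL+e^+_iL^2R=f_iL$, and I use it as a rewriting rule (solving for $LRL$) to run an induction on $i$ showing that $LR^{i}w_r$ lies in the span of $R^{i-1}w_r$ and that $\mathrm{Span}\{R^iw_r:0\le i\le d\}$ is invariant under both $L$ and $R$. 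Invariance under $R$ is clear, since $R^{d+1}w_r\in E^*_{r+d+1}W=0$; invariance under $L$ is the substantive point. As $W$ is irreducible and this span is a nonzero submodule, it must equal $W$, so each $E^*_{r+i}W=\mathrm{Span}(R^iw_r)$ is at most one-dimensional. This is exactly thinness (i), and in particular forces the bottom space to be one-dimensional.

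With the basis in hand, after normalizing so that $Lw_{r+i}=w_{r+i-1}$, the scalars $x_{r+i}$ defined by $Rw_{r+i-1}=x_{r+i}w_{r+i}$ are pinned down by substituting the basis action into \eqref{uniformeq}. Evaluating $RL^2$, $LRL$, $L^2R$ and $L$ on $w_{r+i}$ yields, respectively, $x_{r+i-1}w_{r+i-1}$, $x_{r+i}w_{r+i-1}$, $x_{r+i+1}w_{r+i-1}$ and $w_{r+i-1}$, so \eqref{uniformeq} (with index $r+i$) collapses to the three-term recurrence $e^-_{r+i}x_{r+i-1}+x_{r+i}+e^+_{r+i}x_{r+i+1}=f_{r+i}$, which is precisely the tridiagonal system \eqref{eq:linsys}. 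The nonsingularity built into the parameter matrix (Definition~\ref{3Diag}(iii)) guarantees a unique solution, so the $x_{r+i}$, and hence the entire action of $T$ in the basis $\{w_i\}$, depend only on $r$ and $d$. This gives (iii), and the map $w_i\mapsto w'_i$ between two such modules with equal $(r,d)$ is then a $T$-isomorphism, establishing (ii).

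The main obstacle is the inductive step establishing invariance under $L$, namely that lowering a raised bottom vector $R^iw_r$ returns into the one-dimensional span of $R^{i-1}w_r$ rather than spreading across a higher-dimensional layer. This is the only place where the full strength of \eqref{uniformeq} and the nonsingularity of every principal submatrix of $U$ are needed, and it is what simultaneously forces the bottom space to be one-dimensional. I expect the bookkeeping to be delicate: \eqref{uniformeq} couples three consecutive layers, so the induction must track $L^2R^iw_r$ together with $LR^iw_r$, and the choice of $w_r$ as an eigenvector of $LR$ is what makes the base cases close.
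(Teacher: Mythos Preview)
The paper does not prove this theorem: it is quoted verbatim as \cite[Theorem~2.5]{OldTer} and used as background, so there is no proof in the paper to compare your proposal against.

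On the merits of your sketch itself, the strategy is the right one, but the step you flag as the ``main obstacle'' is a genuine gap as written. Applying \eqref{uniformeq} on $E^*_{r+i}V$ to $u_i:=R^i w_r$ only controls $L^2 u_{i+1}$ (through the $e^+_{r+i}L^2R$ term), not $L u_{i+1}$; knowing $L^2u_{i+1}\in\mathrm{Span}(u_{i-1})$ does not force $Lu_{i+1}\in\mathrm{Span}(u_i)$ before thinness is established. What actually closes the induction is the \emph{transposed} relation: since $R=L^\top$ and $E^*_i$ is symmetric, transposing $\big(e^-_iRL^2+LRL+e^+_iL^2R\big)E^*_i=f_iLE^*_i$ yields
\[
e^-_i\,R^2L+RLR+e^+_i\,LR^2=f_i\,R\quad\text{on }E^*_{i-1}V.
\]
Applying this at level $r+i$ to $u_{i-1}$ and using the inductive hypothesis $Lu_j\in\mathrm{Span}(u_{j-1})$ for $j\le i$ gives $e^+_{r+i}\,Lu_{i+1}\in\mathrm{Span}(u_i)$ directly. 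This finishes the argument whenever all $e^+_j\ne 0$; in the other case permitted by Definition~\ref{3Diag}(ii) (all $e^-_j\ne 0$) you run the symmetric argument from the top, starting with an eigenvector of $RL$ in $E^*_{r+d}W$ and lowering. Your proposal does not invoke either of these devices, so as stated it does not establish $L$-invariance of $\mathrm{Span}\{R^iw_r\}$.
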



\section{Uniform graphs and a tridiagonal relation}
\label{sec:tridiag}

Recall our uniform graph $\G$ from Section \ref{sec:uniform}. In this section we display sufficient conditions on the pair $(U,f)$ of the corresponding uniform structure for $\G$ to have a dual adjacency matrix candidate with respect to $x$. For the rest of the paper we adopt the following notation.

\begin{notation}
	\label{not}
	Let $\Ga=(X, {\cal E})$ denote a finite, simple, and connected bipartite graph with vertex set $X$ and edge set ${\cal E}$.  Let $A \in \MX$ be the adjacency matrix of $\Ga$.  Pick $x\in X$ and let $\varepsilon=\varepsilon(x) \ge 3$ denote the eccentricity of $x$. Let $\Es_i \; (0 \le i \le \varepsilon)$ be the dual idempotents of $\G$ with respect to $x$. Let $L=L(x)$ and $R=R(x)$ be the lowering and the raising matrix of $\Ga$ with respect to $x$. Assume that $\Ga$ admits a uniform structure with respect to $x$, and let $e_i^- \; (2 \le i \le \varepsilon)$, $e_i^+ \; (1 \le i \le \varepsilon-1)$, $f_i \; (1\le i \le \varepsilon)$ be as defined in Section \ref{sec:uniform}. For $y,z \in X$ and a positive integer $i$ let $\gamma_i(y,z)$ denote the number of walks of length $i$ between $y$ and $z$. In addition, for mutually distinct scalars $\{\ths_i\}_{i=0}^{\varepsilon}$ let $A^*=A^*(\ths_0, \ths_1, \ldots, \ths_{\varepsilon})$ denote a diagonal matrix in $\MX$ defined by 
	$$
	(A^*)_{yy}=\theta^*_{\partial(x,y)} \qquad (y \in X).
	$$
\end{notation}

\begin{theorem}\label{thm5}
	With reference to Notation~\ref{not}, pick mutually distinct scalars $\{\ths_i\}_{i=0}^{\varepsilon}$ and let $A^*=A^*(\ths_0, \ths_1, \ldots, \ths_{\varepsilon}) $.
	Assume that there exist scalars $\beta, \rho \in \RR$ such that the following equalities are satisfied:
	\begin{align}
	\beta+1&=\frac{\ths_{i-2}-\ths_{i+1}}{\ths_{i-1}-\ths_{i}}  \qquad &(2\le i \le \varepsilon-1),  \label{meq1} \\[.1in]
		e_{i}^-(\beta+2)&= 1+(\beta+1)\frac{\ths_{i-1}-\ths_{i-2}}{\ths_{i-1}-\ths_{i}} \qquad &(2\le i \le {\varepsilon}),\label{meq2} \\[.1in]
		e_i^+(\beta+2)&=1+(\beta+1)\frac{\ths_{i}-\ths_{i+1}}{\ths_i-\ths_{i-1}} \qquad &(1\le i \le {\varepsilon-1}), \label{meq4} \\[.1in]
		f_i(\beta+2)&=\rho \qquad &(1\le i \le {\varepsilon}).\label{meq6}.
	\end{align}
Then $A^*$ is a dual adjacency matrix candidate with respect to $x$ and with corresponding parameters $\beta, \rho$.
\end{theorem}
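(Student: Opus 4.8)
The plan is to verify equation~\eqref{3diag} entrywise. By Theorem~\ref{thm:gamma} we know that $\gamma = 0$ for bipartite graphs, so it suffices to prove
$$
(A^3A^* - A^*A^3)_{zy} + (\beta+1)(AA^*A^2 - A^2A^*A)_{zy} = \rho(AA^* - A^*A)_{zy}
$$
for all $y,z \in X$. Using Lemma~\ref{lem:entries}, the left-hand side is expressible through walk counts and the scalars $\ths_i$, while the right-hand side is nonzero only when $\partial(y,z)=1$. Since $\Ga$ is bipartite, the parities of $\partial(x,y)$ and $\partial(x,z)$ are constrained by $\partial(y,z)$: the relevant cases are $\partial(y,z)\in\{1,3\}$ (both entries can be nonzero) together with the case $\partial(y,z)=2$ and a few distance combinations where walk counts still survive. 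First I would dispose of the trivial cases: if $\partial(y,z) > 3$ every walk count $\gamma_3$, the shape counts, and $\gamma_1$ vanish, so both sides are $0$; and by symmetry of~\eqref{3diag} under swapping $y,z$ (which negates both sides) I may assume $\partial(x,y) \ge \partial(x,z)$.

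The core of the argument is to rewrite the mixed term $(AA^*A^2 - A^2A^*A)_{zy}$ from Lemma~\ref{lem:entries}(ii), which sums $(\ths_{\partial(x,w)} - \ths_{\partial(x,v)})$ over walks $[y,v,w,z]$, in terms of the lowering and raising matrices via the decomposition $A = L + R$ (recall $F = 0$ as $\Ga$ is bipartite). The idea is that each length-$3$ walk from $y$ to $z$ corresponds to a shape in $\{\ell,r\}^3$, and Lemma~\ref{lem:walks} translates shape-counts into entries of products of $L$ and $R$. The key leverage is the uniform structure relation~\eqref{uniformeq}, namely $e_i^- RL^2 + LRL + e_i^+ L^2 R = f_i L$ on $E_i^*V$, which lets me collapse the three-step shape counts against the parameters $e_i^{\pm}$ and $f_i$. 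I expect that after expanding $A^3 = (L+R)^3$ and $AA^*A^2$, $A^2A^*A$ into monomials in $L,R$ and inserting the $\ths$-differences produced by $A^*$, the coefficient identities~\eqref{meq1}--\eqref{meq6} are precisely what is needed to make the $RL^2$, $LRL$, and $L^2R$ contributions combine into a multiple of $L$ (hence into $\rho(AA^*-A^*A)$) while the remaining $R^2L$, $LR^2$, $R^3$, $L^3$, etc.\ contributions cancel in pairs.

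Concretely, I would organize the verification by the value of $\partial(y,z)$. In the case $\partial(y,z)=1$, write $i = \partial(x,y)$ and use that the neighbors $z$ can sit at distance $i-1$ or $i+1$ (not $i$, by bipartiteness); the walks $[y,v,w,z]$ of length $3$ then split by their shapes, and substituting~\eqref{meq2} and~\eqref{meq4} (which encode $e_i^-$ and $e_i^+$) together with~\eqref{meq1} (encoding $\beta+1$ as the ratio $(\ths_{i-2}-\ths_{i+1})/(\ths_{i-1}-\ths_i)$) should reduce the identity to~\eqref{meq6}, $f_i(\beta+2)=\rho$, via the uniform relation applied on $E_i^*V$. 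In the case $\partial(y,z)=3$, the right-hand side vanishes and one must show the $A^3A^*$ term and the $(\beta+1)$-term cancel; here~\eqref{meq1} does the work, since along a geodesic the relevant $\ths$-differences telescope exactly as prescribed by the ratio defining $\beta+1$. The remaining case $\partial(y,z)=2$ needs care because $\gamma_3(y,z)=0$ (bipartite) but the mixed term may be nonzero; I would check that the $\ths$-differences there sum to zero by pairing each walk $[y,v,w,z]$ with its reversal, or that the surviving terms again reduce through~\eqref{meq2}--\eqref{meq4}.

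The main obstacle I anticipate is the bookkeeping in the $\partial(y,z)=1$ case: correctly accounting for every shape of length-$3$ walk (there are up to four relevant shapes once bipartiteness kills the $f$-type steps), tracking which vertex $v,w$ lies at which distance from $x$, and confirming that the linear combination of~\eqref{meq1}, \eqref{meq2}, and~\eqref{meq4} collapses exactly against the uniform-structure identity~\eqref{uniformeq} to yield the single scalar $\rho$ from~\eqref{meq6}. The algebraic identities are individually routine, but aligning the index shifts (the $i-2,i-1,i,i+1$ appearing in~\eqref{meq1}--\eqref{meq4}) with the distances of the intermediate walk-vertices is where a sign or off-by-one error would most easily creep in, so I would set up the distance profile of each walk explicitly before substituting.
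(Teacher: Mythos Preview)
Your plan is correct and matches the paper's proof almost exactly: reduce to $\gamma=0$ via Theorem~\ref{thm:gamma}, verify \eqref{3diag} entrywise, dispose of $\partial(y,z)>3$ trivially, handle $\partial(y,z)=3$ with \eqref{meq1}, and handle $\partial(y,z)=1$ by classifying length-$3$ walks by shape (there are exactly three shapes, $\ell^2 r$, $\ell r\ell$, $r\ell^2$, not four) and invoking the uniform relation together with \eqref{meq2}, \eqref{meq4}, \eqref{meq6}. One simplification you are missing: the case $\partial(y,z)=2$ (and also $y=z$) requires no care at all, because the ``mixed term'' $(AA^*A^2-A^2A^*A)_{zy}$ is itself a sum over length-$3$ walks from $y$ to $z$, and in a bipartite graph there are no odd-length walks between vertices at even distance, so every term in Lemma~\ref{lem:entries} vanishes simultaneously.
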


\begin{proof}
	By Theorem \ref{thm:gamma}, it suffices to show that the equation 
	\begin{equation}\label{3diagg}
	A^3 A^* - A^* A^3+(\beta+1)( A A^* A^2 - A^2 A^* A)= \rho( A A^* - A^* A)
\end{equation}
holds. We will do that by comparing the $(z,y)$-entry of the left and the right side of \eqref{3diagg} for $y,z \in X$. It is clear from Lemma \ref{lem:entries} that, if $\partial(z,y) \ge 4$, then the $(z,y)$-entries of both sides of \eqref{3diagg} are equal to $0$. Next, if $z=y$ or $\partial(z,y)=2$, then there are no walks of length $3$ between $y$ and $z$ (recall that $\Ga$ is bipartite), and so again, by Lemma \ref{lem:entries}, the $(z,y)$-entries of both sides of  \eqref{3diagg} are equal to $0$.  In what follows, we are left with the case $\partial(y,z) \in \{1,3\}$. Note that as $\Ga$ is bipartite we have that $\partial(x,y) \ne \partial(x,z)$. Therefore, we will assume that $\partial(x,z) < \partial(x,y)$; if $\partial(x,z) > \partial(x,y)$, then the proof is analogous. We split our study into two cases. 

\vspace{.1in}
\noindent
{\bf Case 1:} $\partial(z,y)=3$ and $\partial(x,z)=i-2, \ \partial(x,y)=i+1$ for some $2\leq i \leq \varepsilon-1$.

\vspace{.1in}

\noindent 
Note that if $[ y,v,w,z]$ is a walk of length 3 between $y$ and $z$, then we have that $\partial(x,v)=i$ and $\partial(x,w)=i-1$. Therefore, using Lemma \ref{lem:entries}, we get that 
	\begin{align*}
(A^3 A^* - A^* A^3)_{zy} &= \gamma_3(y,z)(\theta^*_{i+1}-\theta^*_{i-2}),\\
(A A^* A^2 - A^2 A^* A)_{zy} &= \gamma_3(y,z)(\theta^*_{i-1}-\theta^*_{i}),\\
(AA^*-A^*A)_{zy}&=0.
	\end{align*}
	
Using \eqref{meq1} we find that $(z,y)$-entries of both sides of \eqref{3diagg} agree.

\vspace{.1in}
\noindent
{\bf Case 2:} $\partial(x,z)=i-1, \ \partial(x,y)=i$ for some $1 \leq i \leq \varepsilon$.

\vspace{.1in}

\noindent 
Note that in this case we have exactly three possible shapes of walks of length $3$ between $y$ and $z$ in $\Ga$, namely walks of shapes $\ell^2 r$, $\ell r \ell$, and $r \ell^2$. Let us abbreviate $a=\ell^2r(y,z)$, $b=\ell r \ell(y,z)$, $c=r \ell^2(y,z)$, and note that $a=0$ ($c=0$, respectively) if $i=1$ ($i=\varepsilon$, respectively). Note that by Lemma \ref{lem:walks} we have $a = (RL^2)_{zy}$, $b=(L R L)_{zy}$, and $c=(L^2 R)_{zy}$. Let $[ y,v,w,z]$ be a walk of length 3 between $y$ and $z$. Observe that if $[ y,v,w,z]$  is of shape $\ell^2 r$, then $\partial(x,v)=i-1$ and $\partial(x,w)=i-2$.  If $[ y,v,w,z]$  is of shape $\ell r \ell$, then $\partial(x,v)=i-1$ and $\partial(x,w)=i$. Similarly,  if $[ y,v,w,z]$  is of shape $r \ell^2$, then $\partial(x,v)=i+1$ and $\partial(x,w)=i$. Therefore, using Lemma \ref{lem:entries}, we get that
\begin{align*}
	(A^3 A^* - A^* A^3)_{zy} &= (a+b+c)(\ths_{i}-\ths_{i-1}),\\
	(A A^* A^2 - A^2 A^* A)_{zy} &= a(\ths_{i-2}-\ths_{i-1}) + b(\ths_i-\ths_{i-1}) + c(\ths_i-\ths_{i+1}),\\
	(AA^*-A^*A)_{zy}&=\left\{ \begin{array}{lll}
		\ths_i-\ths_{i-1} & \hbox{if } \; \partial(y,z)=1, \\
		0 & \hbox{if } \; \partial(y,z) =3. \end{array} \right. 
\end{align*} 

Recall also that since $\G$ admits a uniform structure, the following equation is satisfied on $E^*_iV$ for $1\leq i\leq \varepsilon$:
\begin{equation}\label{unieq}
		e_i^-RL^2+LRL+ e_i^+ L^2R=f_iL.
\end{equation}
Computing the $(z,y)$-entry of the above equation using Lemma \ref{lem:walks} yields 
	\begin{eqnarray}\label{entrywise}
		e_i^-a+b+e_i^+c= \left\{ \begin{array}{lll}
			f_i & \hbox{ if } \; \partial(z,y)=1, \\
			0  &  \hbox{ otherwise. } \end{array} \right. 
	\end{eqnarray}

It is now clear that the $(z,y)$-entries of \eqref{3diagg} agree if  \eqref{meq2}, \eqref{meq4} nad \eqref{meq6} hold.
\end{proof}

With reference to Notation~\ref{not} and the observation after Definition \ref{3Diag}, it follows that $e_{i+1}^- e_i^+ \ne 1$ for $1 \le i \le \varepsilon-1$. In the next result we derive some further restrictions on parameters $\beta$, $e_i^-$ and $e_i^+$, that follow only from equalities \eqref{meq2} and \eqref{meq4} given in Theorem \ref{thm5}.

\begin{theorem}
	\label{thm6}
	With reference to Notation~\ref{not}, pick a scalar $\beta \in \RR$ and mutually distinct scalars $\{\ths_i\}_{i=0}^{\varepsilon}$. Then, the following (i), (ii) are equivalent:
	\begin{itemize}
		\item[(i)] 
		the equalities \eqref{meq2} and \eqref{meq4} from Theorem \ref{thm5} hold;
		\item[(ii)]
		we have $\beta \in \RR \setminus \{-2,-1\}$, $e_i^- \ne 1$ for $2 \le i \le \varepsilon$, $e_i^+ \ne 1$ for $1 \le i \le \varepsilon-1$, and the following (iia), (iib) hold for $1 \le i \le \varepsilon-1$:
		\begin{itemize}
			\item[(iia)]
			\begin{equation}
				\label{eq6a}
				\beta+1=\frac{(e_{i+1}^--1)(e_i^+-1)}{1-e_i^+ e_{i+1}^-};
			\end{equation}
		\item[(iib)]
		\begin{equation}
			\label{eq6b}
			\ths_{i+1} = \ths_1+(\ths_1-\ths_0) \sum_{j=1}^{i} (-1)^j \prod_{k=1}^j F(k),
		\end{equation}
	where $F(i)=-(e_i^+-1)/(e_{i+1}^- - 1)$ for $1 \le i \le \varepsilon-1$. 
		\end{itemize}
	\end{itemize}
\end{theorem}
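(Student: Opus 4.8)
The plan is to prove the equivalence (i) $\Leftrightarrow$ (ii) by treating the two defining equalities \eqref{meq2} and \eqref{meq4} as a coupled recursion/constraint system in the unknowns $\ths_i$, and extracting from it both the algebraic relation among the $e_i^\pm$ and the closed form for the $\ths_i$. The natural first move is to rewrite both equalities so that the awkward combination $\beta+1$ is isolated. Specifically, I would introduce the shorthand ratio $R_i := (\ths_{i-1}-\ths_{i-2})/(\ths_{i-1}-\ths_i)$ appearing in \eqref{meq2} and the analogous ratio in \eqref{meq4}; note that these two ratios are reciprocal-type expressions built from consecutive differences $\delta_i := \ths_i - \ths_{i-1}$. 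Setting $\delta_i = \ths_i-\ths_{i-1}$ converts everything into statements about the sequence $\{\delta_i\}$, which is cleaner because the $\ths_i$ enter only through differences. The mutual distinctness hypothesis guarantees $\delta_i \neq 0$, so all the quotients are well-defined.

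\medskip
\noindent \textbf{Direction (i) $\Rightarrow$ (ii).} Assuming \eqref{meq2} and \eqref{meq4}, I would first rule out the degenerate values of $\beta$. If $\beta=-2$, the left-hand sides of both \eqref{meq2} and \eqref{meq4} vanish, forcing $(\beta+1)R_i = -1$ and its counterpart, which upon substituting $\beta+1=-1$ gives contradictory constraints on the $\delta_i$ (or collapses the distinctness); if $\beta=-1$, then $\beta+1=0$ and the equations read $e_i^-\cdot 1 = 1$ and $e_i^+\cdot 1 = 1$, i.e.\ $e_i^\pm = 1$, which is exactly the forbidden case flagged in the definition of the parameter matrix (it violates nonsingularity of the $2\times 2$ principal submatrices). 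This simultaneously establishes $\beta \notin\{-2,-1\}$ and $e_i^-\neq 1$, $e_i^+\neq 1$. Next, solving \eqref{meq2} for the ratio gives $R_i$ as an explicit affine function of $e_i^-$ and $\beta$; likewise \eqref{meq4} expresses the reciprocal-type ratio in terms of $e_i^+$. Writing $F(i) = -(e_i^+-1)/(e_{i+1}^- - 1)$ and eliminating the $\ths$-ratios between the index-$i$ instance of \eqref{meq4} and the index-$(i+1)$ instance of \eqref{meq2} (both of which involve the common difference $\delta_i$ and $\delta_{i+1}$) should produce \eqref{eq6a} after clearing denominators; the identity $\delta_{i+1}/\delta_i = -F(i)$ should fall out along the way, and iterating this one-step ratio from the base difference $\delta_1 = \ths_1-\ths_0$ yields the telescoping product $\delta_{i+1} = (\ths_1-\ths_0)(-1)^i\prod_{k=1}^i F(k)$, whose partial sums give \eqref{eq6b}.

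\medskip
\noindent \textbf{Direction (ii) $\Rightarrow$ (i).} Here I would run the computation in reverse: given $\beta \notin\{-2,-1\}$, the conditions $e_i^\pm \neq 1$, relation \eqref{eq6a}, and the explicit formula \eqref{eq6b}, one recovers $\delta_{i+1}/\delta_i = -F(i)$ directly by differencing \eqref{eq6b}, and then verifies \eqref{meq2} and \eqref{meq4} by back-substitution. The key consistency check is that \eqref{eq6a}, solved for the ratio of consecutive $\delta$'s, is precisely compatible with both \eqref{meq2} and \eqref{meq4}; this amounts to checking a single algebraic identity after clearing denominators, using $1 - e_i^+ e_{i+1}^- \neq 0$ (guaranteed by the observation preceding the theorem that $e_{i+1}^- e_i^+ \neq 1$) so that \eqref{eq6a} is meaningful.

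\medskip
I expect the main obstacle to be the bookkeeping in the elimination step that produces \eqref{eq6a}: equalities \eqref{meq2} and \eqref{meq4} are indexed differently (one ranges $2\le i\le\varepsilon$, the other $1\le i\le\varepsilon-1$), so matching up the instances that share a common difference $\delta_i$ requires a careful index shift, and one must confirm that the shared quantity genuinely cancels rather than leaving a residual $\ths$-dependence. A secondary subtlety is verifying that the denominator $1 - e_i^+ e_{i+1}^-$ in \eqref{eq6a} is nonzero precisely where needed; this is handled by the remark immediately before the theorem, but I would state explicitly that distinctness of the $\ths_i$ (equivalently $\delta_i\neq 0$ for all $i$) is what makes every ratio in \eqref{meq1}--\eqref{meq4} well-defined throughout, and that the product formula in \eqref{eq6b} automatically certifies this distinctness since each factor $F(k)$ is nonzero when $e_k^+\neq 1$.
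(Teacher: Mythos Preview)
Your overall plan matches the paper's proof almost exactly: rewrite \eqref{meq2} and \eqref{meq4} in terms of the ratios $\delta_{i+1}/\delta_i$, eliminate between the index-$i$ instance of \eqref{meq4} and the index-$(i+1)$ instance of \eqref{meq2} to obtain \eqref{eq6a}, deduce $\delta_{i+1}/\delta_i = -F(i)$, telescope to get \eqref{eq6b}, and reverse for (ii)$\Rightarrow$(i). The paper does precisely this, with the intermediate identity
\[
\frac{e_{i+1}^-(\beta+2)-1}{\beta+1}=\frac{\beta+1}{e_i^+(\beta+2)-1}
\]
playing the role of your ``elimination step,'' and the observation $F(i)=(e_i^+(\beta+2)-1)/(\beta+1)$ doing the work of converting back and forth between $F(i)$ and the $\theta^*$-ratios.

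There is one genuine slip. You write that ruling out $\beta=-1$ ``simultaneously establishes $\beta\notin\{-2,-1\}$ and $e_i^-\neq 1$, $e_i^+\neq 1$.'' It does not: your argument shows only that \emph{if} $\beta=-1$ then $e_i^\pm=1$ for all $i$, contradicting the parameter-matrix constraint; it says nothing about $e_i^\pm$ when $\beta\neq -1$. A separate argument is needed. The paper supplies it: once $\beta\neq -1$ is known, suppose $e_i^-=1$ for some $i$; then \eqref{meq2} reads $\beta+1=(\beta+1)(\ths_{i-1}-\ths_{i-2})/(\ths_{i-1}-\ths_i)$, and cancelling $\beta+1\neq 0$ forces $\ths_i=\ths_{i-2}$, contradicting distinctness. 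The case $e_i^+=1$ is symmetric via \eqref{meq4}. You should insert this step before proceeding to the elimination.
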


\begin{proof}
  Assume first that (i) above holds. Suppose that $\beta=-1$. Then \eqref{meq2} and \eqref{meq4} yield that $e_1^+=e_2^-=1$, contradicting $e_1^+ e_2^- \ne 1$.  Suppose that $\beta=-2$. Then, \eqref{meq4} yields $\ths_0 = \ths_2$, a contradiction. Suppose that $e_i^- = 1$ for some $2 \le i \le \varepsilon$. Then, employing \eqref{meq2} we get that
  $$
  \beta+1 = (\beta+1) \frac{\ths_{i-1} - \ths_{i-2}}{\ths_{i-1} - \ths_i}.
  $$
  As $\beta \ne -1$, this yields $\ths_i = \ths_{i-2}$, a contradiction. This shows that $e_i^- \ne 1$.
  Similarly as above we show that $e_i^+ \ne 1$ for $1 \le i \le \varepsilon-1$ .

  Pick an integer $i \; (1 \le i \le \varepsilon-1)$. Observe that $e_i^+(\beta+2)-1 \ne 0$, otherwise we would have $\ths_i = \ths_{i+1}$, a contradiction. Note that it follows from \eqref{meq2} and \eqref{meq4} that
  $$
  \frac{e_{i+1}^-(\beta+2)-1}{\beta+1} = \frac{\beta+1}{e_i^+(\beta+2)-1}.
  $$
  Solving this equation for $\beta+1$ (and keeping in mind that $\beta\neq -2$), we obtain \eqref{eq6a}.  Using  \eqref{eq6a} we now easily find that 
  $$
    F(i)=\frac{e_i^+(\beta+2)-1}{\beta+1},
  $$
  and so using \eqref{meq4} we have 
  $$
    F(i) = \frac{\ths_i-\ths_{i+1}}{\ths_i-\ths_{i-1}} \qquad (1 \le i \le \varepsilon-1).
  $$
  It follows
  $$
F(i) F(i-1) \cdots F(1)   = \frac{\ths_i-\ths_{i+1}}{\ths_i-\ths_{i-1}} \frac{\ths_{i-1}-\ths_i}{\ths_{i-1}-\ths_{i-2}} \cdots \frac{\ths_1-\ths_2}{\ths_1-\ths_0}   = 
 (-1)^{i-1} \frac{\ths_i-\ths_{i+1}}{\ths_1-\ths_0},
  $$
  and so 
  \begin{equation}
  	\label{eq6c}
  	\ths_{i+1} - \ths_i = (-1)^{i} (\ths_1-\ths_0) F(i) F(i-1) \cdots F(1).
  \end{equation}
Adding equations \eqref{eq6c} we finally get \eqref{eq6b}.

Assume now that (ii) above holds.  Note that it follows from \eqref{eq6a} that 
$$
\frac{e_{i+1}^-(\beta+2)-1}{\beta+1} = \frac{\beta+1}{e_i^+(\beta+2)-1},
$$
and so \eqref{meq2} holds if and only if  \eqref{meq4} holds. Let us therefore prove that  \eqref{meq4} holds. We have 
$$
\frac{\ths_{i}-\ths_{i+1}}{\ths_i-\ths_{i-1}} = \frac{-(-1)^i \prod_{k=1}^i F(k)}{(-1)^{i-1} \prod_{k=1}^{i-1} F(k)} =F(i) = -(e_i^+-1)/(e_{i+1}^- - 1) = \frac{e_i^+(\beta+2)-1}{\beta+1}.
$$
This finishes the proof.
\end{proof}

\begin{theorem}
	\label{thm7}
	With reference to Notation~\ref{not}, pick a scalar $\beta \in \RR$ and mutually distinct scalars $\{\ths_i\}_{i=0}^{\varepsilon}$. Assume that equivalent conditions (i), (ii) of Theorem \ref{thm6} hold. Then, the following (i), (ii) are equivalent:
	\begin{itemize}
		\item[(i)] 
		the equality \eqref{meq1} from Theorem \ref{thm5} hold  for $2 \le i \le \varepsilon-1$;
		\item[(ii)]
		\begin{equation}
			\label{eq7a}
			1+F(i-1) F(i)=-\beta F(i-1)
		\end{equation}
		hold for $2 \le i \le \varepsilon-1$. 
	\end{itemize}
\end{theorem}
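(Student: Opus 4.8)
The plan is to show the equivalence of \eqref{meq1} and \eqref{eq7a} by translating the hypothesis \eqref{meq1} into the language of the quantities $F(i)$, which Theorem~\ref{thm6} already identifies as the successive ratios $F(i) = (\ths_i - \ths_{i+1})/(\ths_i - \ths_{i-1})$ for $1 \le i \le \varepsilon-1$. The key observation is that \eqref{meq1} asserts $\beta+1 = (\ths_{i-2}-\ths_{i+1})/(\ths_{i-1}-\ths_i)$, and the numerator on the right-hand side should be split as a telescoping sum $\ths_{i-2}-\ths_{i+1} = (\ths_{i-2}-\ths_{i-1}) + (\ths_{i-1}-\ths_i) + (\ths_i-\ths_{i+1})$. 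Dividing through by $\ths_{i-1}-\ths_i$, the middle term contributes $1$, while the first and last terms are exactly expressible through $F(i-1)$ and $F(i)$.

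First I would record the two identities $\ths_i - \ths_{i+1} = F(i)(\ths_i - \ths_{i-1})$ and $\ths_{i-2}-\ths_{i-1} = (\ths_{i-1}-\ths_i)/F(i-1)$, both immediate from the definition of $F$. Then I would compute
$$
\frac{\ths_{i-2}-\ths_{i+1}}{\ths_{i-1}-\ths_i}
= \frac{\ths_{i-2}-\ths_{i-1}}{\ths_{i-1}-\ths_i} + 1 + \frac{\ths_i-\ths_{i+1}}{\ths_{i-1}-\ths_i}.
$$
The first summand equals $1/F(i-1)$; for the last summand, writing $\ths_i-\ths_{i+1} = F(i)(\ths_i-\ths_{i-1})$ and $\ths_i - \ths_{i-1} = -(\ths_{i-1}-\ths_i)$ gives $-F(i)$. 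Hence \eqref{meq1} reads $\beta+1 = 1/F(i-1) + 1 - F(i)$, equivalently $\beta = 1/F(i-1) - F(i)$, and multiplying through by $F(i-1)$ (which is nonzero, since $F(i-1)$ is a ratio of differences of mutually distinct $\ths$ values) yields $\beta F(i-1) = 1 - F(i-1)F(i)$, i.e.\ $1 + F(i-1)F(i) = -\beta F(i-1)$ after rearranging — precisely \eqref{eq7a}. Each step is reversible, so the two conditions are equivalent for every $i$ in the range $2 \le i \le \varepsilon-1$.

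The only subtlety to check is that all denominators appearing are nonzero, so that the manipulations are genuinely reversible rather than merely implications. The factor $\ths_{i-1}-\ths_i$ is nonzero because the $\{\ths_i\}$ are mutually distinct; the quantities $F(i-1)$ and $F(i)$ are nonzero for the same reason, being ratios of such differences. Since Theorem~\ref{thm6} is assumed throughout, the expressions $F(k)$ are well defined and the formula $F(i)=(\ths_i-\ths_{i+1})/(\ths_i-\ths_{i-1})$ from its proof is available. I expect no real obstacle here: the result is a purely algebraic reindexing of \eqref{meq1} in terms of the ratios $F$, and the main point is simply to perform the telescoping split cleanly and to verify the nonvanishing of each denominator so that the equivalence holds in both directions.
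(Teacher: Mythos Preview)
Your approach is correct and essentially the same as the paper's one-line argument: both rely on the identity $F(i) = (\ths_i-\ths_{i+1})/(\ths_i-\ths_{i-1})$ established in Theorem~\ref{thm6}, and then rewrite \eqref{meq1} in terms of $F(i-1)$ and $F(i)$.

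However, your execution contains two sign slips that happen to cancel. From $F(i-1) = (\ths_{i-1}-\ths_i)/(\ths_{i-1}-\ths_{i-2})$ one gets $\ths_{i-1}-\ths_{i-2} = (\ths_{i-1}-\ths_i)/F(i-1)$, so the first summand in your telescoping display equals $-1/F(i-1)$, not $1/F(i-1)$. The correct chain is then
\[
\beta+1 \;=\; -\frac{1}{F(i-1)} + 1 - F(i), \qquad \beta F(i-1) = -1 - F(i-1)F(i),
\]
which rearranges directly to \eqref{eq7a}. Your line ``$\beta F(i-1) = 1 - F(i-1)F(i)$, i.e.\ $1 + F(i-1)F(i) = -\beta F(i-1)$'' is an incorrect rearrangement of an already incorrect equation; the two errors cancel, but each intermediate claim is false as written. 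Fix the sign in the second recorded identity and the rest goes through cleanly.
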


\begin{proof}
	Using \eqref{eq6b} we easily find that \eqref{meq1} and \eqref{eq7a} are equivalent for $2 \le i \le \varepsilon-1$.
\end{proof}

\begin{remark}\label{check}
	With reference to Notation~\ref{not}, observe that Theorems \ref{thm5}, \ref{thm6} and \ref{thm7} gives us the following algorithm for finding a dual adjacency matrix candidate with respect to $x$.
	\begin{enumerate}
		\item
		Check if there exists a scalar $\beta \in \RR \setminus \{-2,-1\}$ such that \eqref{eq6a} hold for $1 \le i \le \varepsilon-1$.
		\item 
		If the above check is positive, then pick scalars $\ths_0, \ths_1 \; (\ths_0 \ne \ths_1)$, and set 
		$$
			\ths_{i+1} = \ths_1+(\ths_1-\ths_0) \sum_{j=1}^{i} (-1)^j \prod_{k=1}^j F(k)
	     $$
	     for $1 \le i \le \varepsilon-1$, where $F(i)=-(e_i^+-1)/(e_{i+1}^- - 1)$ for $1 \le i \le \varepsilon-1$. 
	     \item
	     Check if the scalars $\{\ths_i\}_{i=0}^{\varepsilon}$ are mutually distinct.
	     \item
	     Check if \eqref{eq7a} holds for $2 \le i \le \varepsilon-1$.
	     \item
	     Check if the scalars $ \{f_i\}_{i=0}^{\varepsilon}$ are constant and set $\rho=f_i(\beta+2)$.
	     \item
	     If all of the above checks are positive, then it follows from Theorems \ref{thm5}, \ref{thm6} and \ref{thm7} that $A^*=A^*(\ths_0, \ths_1, \ldots, \ths_{\varepsilon}) $ is a dual adjacency matrix candidate with respect to $x$ with corresponding parameters $\beta, \rho$.
	\end{enumerate}
\end{remark}

\begin{remark}
With reference to Theorems \ref{thm6} and \ref{thm7}, assume $\ths_0~=~-1$, $\ths_1~=~0$, and set 
$$
P(i)=(-1)^iF(i)F(i-1)\cdots F(1) \qquad (1\le i \le \varepsilon-1)
$$ 
with $P(0)=1$. Then \eqref{eq6b} yields
\begin{equation}
			\label{eq6bb}
			\ths_{i} = \sum_{j=1}^{i-1} (-1)^j \prod_{k=1}^j F(k)=\sum_{j=1}^{i-1}P(j) \qquad (2\le i \le \varepsilon).
\end{equation}
On the other hand, multiplying both sides of \eqref{eq7a} by $P(i-2)$, we obtain a recurrence formula for $P(i)$:
\begin{equation}\label{eq7b}
P(i)=\beta P(i-1)-P(i-2) \qquad (2 \le i \le \varepsilon-1),
\end{equation}
with $P(0)=1$ and $P(1)=-F(1)$. Thus $P(i)$ is a constant-recursive sequence of degree $2$. Here we need to distinguish two cases: $(i) \ \beta=2$ or $(ii) \ \beta\neq 2$.

$(i) \ \beta=2$. The characteristic roots of the recurrence \eqref{eq7b} are identical and equal to $1$. Then, using the initial values of the sequence, we obtain
$$
P(i)=1-(1+F(1))i \qquad (0 \le i \le \varepsilon-1),
$$
which is an arithmetic progression with first term $1$ and common difference $-(1+F(1))$. Thus \eqref{eq6bb} gives
$$
\ths_i=\frac{1}{2}(i-1)\left(2 - (1 + F(1))i\right) \qquad (0\le i \le \varepsilon).
$$

$(ii) \ \beta\neq 2$. Since the characteristic roots of the recurrence \eqref{eq7b} are $\frac{\beta \pm\sqrt{\beta^2-4}}{2}$, using the initial values of the sequence, it follows that
$$
P(i)=\left(\frac{1}{2}-\frac{\beta+2F(1)}{2\sqrt{\beta^2-4}}\right) \left(\frac{\beta +\sqrt{\beta^2-4}}{2}\right)^i+\left(\frac{1}{2}+\frac{\beta+2F(1)}{2\sqrt{\beta^2-4}}\right)\left(\frac{\beta -\sqrt{\beta^2-4}}{2}\right)^i \ (0 \le i \le \varepsilon-1),
$$
which is the sum of two distinct geometric progressions with first term $\frac{1}{2}\mp\frac{\beta+2F(1)}{2\sqrt{\beta^2-4}}$ and common ratio $r_{\pm}=\frac{\beta \pm\sqrt{\beta^2-4}}{2}$, respectively. Thus, \eqref{eq6bb} becomes
\begin{equation*}
\ths_{i} = \frac{1+r_+ F(1)}{\left(\beta-2\right) \left(1+r_+ \right)} (1-r_+^{i-1}) +
\frac{1+r_{-}F(1)}{\left(\beta-2\right) \left(1+r_{-}\right)} \left(1-r_{-}^{i-1}\right) \qquad (0\le i \le \varepsilon).
\end{equation*}

\end{remark}


\section{An application: The full bipartite graphs of  dual polar graphs}\label{sec:app}

In this section, we use the results from Section \ref{sec:tridiag} to show that the full bipartite graphs of dual polar graphs are $Q$-polynomial. Let us first recall the definition of a full bipartite graph. 

\begin{definition}\label{def3.1}
	Let $\Delta=(Y,\mathcal{F})$ denote a graph with vertex set $Y$ and edge set $\mathcal{F}$. Fix $x \in Y$ and define $\mathcal{F}_f = \mathcal{F} \setminus \{yz \mid \partial(x,y) = \partial(x,z)\}$. Observe that the graph $\Delta_f=(Y,\mathcal{F}_f)$ is bipartite. The graph $\Delta_f $ is called the full bipartite graph of $\Delta$ with respect to $x$. 
\end{definition}
With reference to Definition \ref{def3.1}, let $\varepsilon=\varepsilon(x)$ denote the eccentricity of $x$ and let $V$ denote the standard module for $\Delta$. Since the vertex set of $\Delta$ is equal to the vertex set of $\Delta_f$, observe that $V$ is also the standard module for $\Delta_f$. Recall that the Terwilliger algebra $T=T(x)$ of $\Delta$ is generated by the adjacency matrix $A$ and the dual idempotents $\e_i$ ($0\leq i \leq \varepsilon$). Furthermore, we have $A=L+F+R$ where $L,F$, and $R$ are the corresponding lowering, flat, and raising matrices, respectively. Let $A_f$ denote the adjacency matrix of $\Delta_f$ and let $T_f=T_f(x,A_f)$ be the Terwilliger algebra of $\Delta_f$ with respect to $x$. As $\Delta_f$ if bipartite, the flat matrix of $\Delta_f$ with respect to $x$ is equal to the zero matrix. Moreover, the lowering and the raising matrices of $\Delta_f$ with respect to $x$ are equal to $L$ and $R$, respectively. It follows that $A_f=L+R$.  For $0 \le i \le \varepsilon$, note also that the $i$-th dual idempotent of $\Delta_f$ with respect to $x$ is equal to $\e_i$. Consequently, the algebra $T_f$ is generated by the matrices $L, R$, and $\e_i$ ($0\leq i \leq \varepsilon$). Furthermore, the dual adjacency algebra of $\Delta$ with respect to $x$ coincides with the dual adjacency algebra of $\Delta_f$ with respect to $x$.

\begin{lemma} 
	(\cite[Lemma 3.4]{FMMM})
	\label{lem:modules2}
	With the above notation, let $W$ denote a $T$-module. Then,  the following (i), (ii) hold.

	\begin{enumerate}[label=(\roman*)]
		\item $W$ is a $T_f$-module.
		\item  If $W$ is a thin irreducible $T$-module, then $W$ is a thin irreducible $T_f$-module.
	\end{enumerate}
\end{lemma}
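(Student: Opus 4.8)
The plan is to reduce both statements to the single algebraic fact that $T_f \subseteq T$. First I would verify this inclusion. Recall that $T$ is generated by $M$ and $M^*$, so $A \in T$ and $E_i^* \in T$ for $0 \le i \le \varepsilon$. Since
\[
L = \sum_{i=1}^{\varepsilon} E_{i-1}^* A E_i^*, \qquad R = \sum_{i=0}^{\varepsilon-1} E_{i+1}^* A E_i^*,
\]
and $T$ is closed under addition and multiplication, it follows that $L, R \in T$. As $T_f$ is generated by $L$, $R$, and the dual idempotents $E_i^*$ (all now seen to lie in $T$), we obtain $T_f \subseteq T$. Consequently, any $T$-module $W$ satisfies $BW \subseteq W$ for every $B \in T_f$, which is exactly the statement that $W$ is a $T_f$-module. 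This settles part (i).

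For part (ii), assume $W$ is a thin irreducible $T$-module. Thinness transfers for free: since the dual idempotents of $\Delta_f$ with respect to $x$ coincide with those of $\Delta$, the decomposition $W = \sum_i E_i^* W$ and the dimensions $\dim(E_i^* W)$ are identical whether $W$ is viewed over $T$ or over $T_f$, so $W$ is a (nonzero) thin $T_f$-module. The remaining task is to prove $T_f$-irreducibility. I would take an arbitrary $T_f$-submodule $W'$ of $W$ and show it is already a $T$-submodule. Since each $E_i^* \in T_f$, the subspace $W'$ is $E_i^*$-invariant, whence $W' = \sum_i E_i^* W'$ with $E_i^* W' \subseteq E_i^* W$; by thinness each summand is either $0$ or the whole one-dimensional layer $E_i^* W$. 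The key observation is that the flat matrix $F = \sum_i E_i^* A E_i^*$ preserves every layer, i.e.\ $F E_i^* W \subseteq E_i^* W$. Hence in both cases $F E_i^* W' \subseteq E_i^* W'$, and summing over $i$ gives $F W' \subseteq W'$. Therefore $W'$ is invariant under $A = L + F + R$ and under all $E_i^*$, so it is a $T$-submodule; by $T$-irreducibility of $W$ we conclude $W' \in \{0, W\}$, and $W$ is $T_f$-irreducible.

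The only delicate point, and the one to watch, is the irreducibility direction of (ii): a priori, passing to the smaller algebra $T_f$ (which omits the flat matrix $F$) could create new, smaller invariant subspaces and destroy irreducibility. The argument above shows this cannot occur precisely because thinness forces each subconstituent $E_i^* W$ to have dimension at most one; the only generator of $T$ not manifestly present in $T_f$, namely $F$, acts within a single such layer and so automatically stabilizes every $E_i^*$-invariant subspace. Thus no invariance information is lost in dropping $F$, and every $T_f$-submodule of $W$ is in fact a $T$-submodule. I expect the writing to consist mostly of making this layer-by-layer bookkeeping precise; there is no genuine computational obstacle.
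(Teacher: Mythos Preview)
The paper does not prove this lemma; it is quoted from \cite[Lemma~3.4]{FMMM} and used as a black box. Your argument is a correct self-contained proof: the inclusion $T_f\subseteq T$ gives (i) immediately, and for (ii) your layer-by-layer reasoning is sound---thinness forces each $E_i^*W$ to have dimension at most one, $F$ maps $E_i^*W$ into itself, and hence any $T_f$-submodule $W'=\sum_i E_i^*W'$ is automatically $F$-invariant, so $A$-invariant, so a $T$-submodule.
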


Let $b$ denote a prime power and let $\mathbb{F}_b$ denote the finite field of order $b$. Let $U$ denote a finite-dimensional vector space over $\mathbb{F}_b$ endowed with one of the following nondegenerate forms.
$$
\begin{array}{l|l|l|l}
\text { name } & \operatorname{dim} U & \text { form } & {e} \\
\hline C_D(b) & 2 D & \text { symplectic } & 1 \\[.05in]
B_D(b) & 2 D+1 & \text { quadratic } & 1 \\[.05in]
D_D(b) & 2 D & \text { quadratic } & 0 \\[.025in]
& & \text { (Witt index } D) & \\[.05in]
{ }^2 D_{D+1}(b) & 2 D+2 & \begin{array}{l}
\text { quadratic }
\end{array} & 2 \\[.025in]
& & \text { (Witt index } D) & \\[.05in]
{ }^2 A_{2 D}(q) & 2 D+1 & \text { Hermitean }\left(b=q^2\right) & \frac{3}{2} \\[.05in]
{ }^2 A_{2 D-1}(q) & 2 D & \text { Hermitean }\left(b=q^2\right) & \frac{1}{2}
\end{array}
$$
For the rest of this section let $\Delta=(Y,\mathcal{F})$ denote a dual polar graph associated with one of the above nondegenerate forms, see \cite[Section 9.4]{BCN} for a detailed definition. Note that $\Delta$ is distance-regular with diameter $D$ and with intersection numbers 
\begin{equation}
	\label{eq:intnum}
	b_i = \frac{b^{i+e}(b^{D-i}-1)}{b-1}, \qquad c_i = \frac{b^i-1}{b-1}, 
	\qquad a_i = \frac{(b^e-1)(b^i-1)}{b-1},
\end{equation}
see \cite[Theorem 9.4.3]{BCN}. Observe that $\Delta$ is bipartite if and only if $e=0$. For the rest of this section we assume $\Delta$ is non-bipartite, that is $e \ne 0$.

Fix $x\in Y$, and let $T=T(x)$ denote the corresponding Terwilliger algebra of $\Delta$. Let $W$ denote an irreducible $T$-module with endpoint $r$ and diameter $d$. Then $W$ is thin (see \cite[Example 6.1]{Terpart3}). There exist a unique irreducible $T$-module with endpoint $0$, and this module has diameter $D$, see \cite[Lemma 3.6]{Terpart1}. We call this module the {\em trivial} $T$-module. The following result is a folklore, but for convenience of the reader we include a sketch of the proof.

\begin{proposition}
	\label{prop:long_module}
	 Let $\Delta=(Y,\mathcal{F})$ denote a non-bipartite dual polar graph. Fix $x\in Y$ and let $T=T(x)$ denote the corresponding Terwilliger algebra of $\Delta$. There exists an irreducible $T$-module with endpoint $1$ and diameter $D-1$.
\end{proposition}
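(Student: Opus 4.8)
Suppose, with the notation of the statement, we seek an irreducible $T$-module with endpoint $1$ and diameter $D-1$. The plan is to produce a thin irreducible $T$-module $W$ that is nonzero on each of $E^*_1V,\ldots,E^*_DV$ and has $E^*_0W=0$; by definition such a module has endpoint $1$ and diameter $D-1$, and since every irreducible $T$-module is thin and the trivial module is the unique module of endpoint $0$, this is exactly what is required. The first step is to determine the local structure at $x$. Two vertices of $\Delta$ are adjacent exactly when the corresponding maximal isotropic subspaces meet in a subspace of codimension one, so for each codimension-one subspace $H$ of $x$ the maximal isotropics that contain $H$ and differ from $x$ form a clique of size $b^e$ in the neighbourhood of $x$. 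As $\Delta$ is distance-regular with $a_1=b^e-1$ by \eqref{eq:intnum}, and $e\neq0$ gives $b^e\geq2$, every neighbour of $x$ has exactly $b^e-1$ neighbours inside the neighbourhood of $x$, all lying in its own clique. Hence the subgraph induced on the neighbourhood of $x$ is a disjoint union of cliques of size $b^e$, and the local matrix $E^*_1AE^*_1$ has exactly the two eigenvalues $b^e-1$ and $-1$.

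The second step is to start an irreducible module from the eigenvalue $-1$. Eigenvectors of a symmetric matrix for distinct eigenvalues are orthogonal, and the trivial module meets $E^*_1V$ in the all-ones vector, which is a $(b^e-1)$-eigenvector of $E^*_1AE^*_1$; so the $-1$-eigenspace (of dimension $\frac{(b^e-1)(b^D-1)}{b-1}>0$) is orthogonal to the trivial module and consists of vectors $w_1$ with $\sum_y(w_1)_y=0$, whence $Lw_1=0$. Consequently some irreducible $T$-module $W$ has $E^*_1W$ spanned by a $-1$-eigenvector $w_1$; then $W$ is thin, has endpoint $1$, and has local eigenvalue $-1$. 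Let $w_1,\ldots,w_{1+d}$ be the thin basis with $w_i\in E^*_iW$ and $Rw_{i-1}$ a scalar multiple of $w_i$; what we must prove is $d=D-1$, i.e.\ $E^*_iW\neq0$ for $1\leq i\leq D$. To this end I would control $\|Rw_i\|^2$ through the identity $E^*_iA^2E^*_i=E^*_iAE^*_{i-1}AE^*_i+(E^*_iAE^*_i)^2+LR\big|_{E^*_iV}$ together with \eqref{eq:intnum}. At the bottom this is short: $E^*_1AE^*_0AE^*_1$ equals the all-ones matrix on the neighbourhood of $x$ and annihilates $w_1$, expanding $0=\big(\sum_y(w_1)_y\big)^2$ over distances and using $\langle w_1,E^*_1AE^*_1w_1\rangle=-\|w_1\|^2$ forces $\sum_{\partial(y,z)=2}(w_1)_y(w_1)_z=0$, and one computes $\|Rw_1\|^2=b_1\|w_1\|^2>0$, so $d\geq1$. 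I would then push this computation up the subconstituents to reach $E^*_DW\neq0$.

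The propagation is the main obstacle. The bottom level works because of the special cancellation $\sum_{\partial(y,z)=2}(w_1)_y(w_1)_z=0$ available in the neighbourhood of $x$; for $i\geq2$ the evaluation of $\langle w_i,A^2w_i\rangle$ requires tracking how $w_i$ is distributed over the $i$-th subconstituent, and the flat term $(E^*_iAE^*_i)^2$ no longer drops out. I expect the conclusion to follow from distance-regularity together with the positivity of $b_i$ for $0\leq i\leq D-1$ in \eqref{eq:intnum}, which keeps the raising map from collapsing before the top subconstituent is reached; for $D=2$ one checks directly that the $-1$-module has diameter $1=D-1$ (while the companion $(b^e-1)$-module has diameter $0$). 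Alternatively, and this is likely the intended ``folklore'' route, one avoids the induction altogether by quoting the known explicit decomposition of the Terwilliger algebra of a dual polar graph into irreducible modules, from which an endpoint-$1$, diameter-$(D-1)$ module can be read off at once.
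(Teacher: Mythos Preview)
Your argument has a genuine gap at exactly the place you flag: the propagation from level $1$ to level $D$ is not carried out, and the sentence ``I expect the conclusion to follow from distance-regularity together with the positivity of $b_i$'' is not a proof. Positivity of $b_i$ tells you that $R$ does not vanish on the trivial module, but it says nothing about $R$ on your particular $w_i$; indeed, endpoint-$1$ modules of diameter $D-2$ do exist here, and for those the raising map does collapse before reaching $E^*_DV$ even though every $b_i>0$. Your base step $\|Rw_1\|^2=b_1\|w_1\|^2$ is correct, but nothing in the write-up distinguishes the $-1$-eigenvector module from a potential diameter-$(D-2)$ module beyond the unproved assertion for $D=2$. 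To finish along these lines you would need either the explicit formula for $\|Rw_i\|^2$ at every level (essentially the intersection numbers of the thin module, computable from \cite{Ter} but not derived here) or the known correspondence between the local eigenvalue and the diameter for endpoint-$1$ modules in $Q$-polynomial graphs.

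The paper avoids this entirely by a dimension count. It picks a neighbour $y$ of $x$, uses that $\Delta$ is $1$-homogeneous \cite{CN} so that the characteristic vectors of the nonempty sets $D^i_j=\{z:\partial(x,z)=i,\ \partial(y,z)=j\}$ span a $T$-module $U$, and observes that $\dim E^*_0U=1$, $\dim E^*_iU=3$ for $1\le i\le D-1$, and $\dim E^*_DU=2$. Since every irreducible $T$-module is thin, $U$ must decompose as the trivial module plus two endpoint-$1$ modules; the fact that $\dim E^*_DU=2$ forces one of them to reach $E^*_DV$, i.e.\ to have diameter $D-1$. No local-eigenvalue analysis or level-by-level propagation is needed, and the argument never has to decide which local eigenvalue corresponds to which diameter.
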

\begin{proof}
	Let $W$ denote an irreducible $T$-module with endpoint $1$. Then the diameter of $W$ is either $D-2$ or $D-1$, see \cite[Theorem 10.7]{goter} and \cite[Theorem 11.4]{Ter}. 
	Pick $y \in Y$ such that $\partial(x,y)=1$. For $0 \le i,j \le D$ we define $D_j^i=D_j^i(x,y)$ by
$$
D_j^i = \{z \in Y \mid \partial(x,z)=i, \partial(y,z)=j\}.
$$ 
Note $D^i_j \ne \emptyset$ if and only if $|i-j| \le 1$ and $(i,j) \ne (0,0)$. It turns out that $\Delta$ is 1-homogeneous, see \cite[Theorem 1]{CN}. Consequently the characteristic vectors of the nonempty sets $D^i_j$ form a basis for a $T$-module $U$. Note that we have $\dim(\Es_0 U)=1$, $\dim(\Es_i U)=3$ for $1 \le i \le D-1$ and   $\dim(\Es_D U)=2$. Since every irreducible $T$-module is thin, it follows that $U$ is a direct sum of  the trivial $T$-module and two irreducible $T$-modules with endpoint 1, one of diameter $D-2$ and the other one of diameter $D-1$. This concludes the proof.
\end{proof}

Let  $\Delta_f$ denote the full bipartite graph of $\Delta$ with respect to $x$ and let $T_f=T_f(x)$ denote the corresponding Terwilliger algebra. By the above comments and Lemma \ref{lem:modules2} every irreducible $T$-module is also irreducible $T_f$-module. In particular, $T_f$ admits a thin irreducible $T_f$-module $W_0$ with endpoint $0$ and diameter $D$, and a thin irreducible $T_f$-module $W_1$ with endpoint $1$ and diameter $D-1$. Furthermore, it follows from \cite[Proposition 26.4]{w:dual} that $\Delta_f$ admits a uniform structure with respect to $x$ with corresponding parameters
\begin{equation}\label{eq:dp1}
 e_i^-=-\frac{b^2}{b+1} \ (2\le i\le \varepsilon), \quad  e_i^+=-\frac{b^{-1}}{b+1} \ (1\le i\le \varepsilon-1), \quad f_i=b^{e+D-1}  \ (1\le i\le \varepsilon).
\end{equation} 

\begin{theorem}\label{betrh}
With reference to Notation \ref{not}, let $\Ga$ denote the full bipartite graph of a non-bipartite dual polar graph $\Delta$ with respect to the vertex $x$. Pick scalars $\ths_0, \ths_1\in \RR$ such that $\ths_0\neq\ths_1$, and define 
\begin{equation}\label{rec}
\ths_{i+1}=\ths_1+(\ths_1-\ths_0)\frac{b^i-1}{b^i(b-1)} \ (1\le i\le \varepsilon-1).
\end{equation}
Then, $A^*=A^*(\ths_0,\ths_1, \ldots, \ths_{\varepsilon})$ is a dual adjacency matrix candidate of $\Ga$ with respect to $x$ with corresponding parameters $\beta=b+b^{-1}$ and $\rho=b^{e+D-1}(b+b^{-1}+2)$.
\end{theorem}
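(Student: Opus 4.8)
The plan is to verify that the hypotheses of Theorem~\ref{thm5} are satisfied by the explicitly given data, and then invoke the machinery already developed. Concretely, I would set $\beta = b + b^{-1}$ and use the uniform structure parameters from \eqref{eq:dp1}, namely $e_i^- = -b^2/(b+1)$, $e_i^+ = -b^{-1}/(b+1)$, and $f_i = b^{e+D-1}$, and check that the four families of equalities \eqref{meq1}--\eqref{meq6} hold for the scalars $\{\ths_i\}_{i=0}^\varepsilon$ defined by the recurrence \eqref{rec}. Rather than attacking \eqref{meq1}--\eqref{meq6} head-on, I would route the argument through Theorems~\ref{thm6} and~\ref{thm7} (equivalently, through the algorithm in Remark~\ref{check}), since these reduce everything to checking \eqref{eq6a}, \eqref{eq7a}, and the constancy of $f_i$.

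The first step is to compute $F(i) = -(e_i^+ - 1)/(e_{i+1}^- - 1)$. Since the $e_i^\pm$ are constant in $i$ here, $F(i)$ is a single constant $F$; substituting the values gives $F = -\bigl(-b^{-1}/(b+1) - 1\bigr)/\bigl(-b^2/(b+1) - 1\bigr)$, which simplifies after clearing denominators. I expect $F$ to come out to a clean rational function of $b$ (I anticipate $F = (b+b^{-1}+1)/\cdots$ or similar), and I would verify that plugging this constant $F$ into the recurrence \eqref{eq6b} of Theorem~\ref{thm6} reproduces exactly the closed form \eqref{rec}: with $F(k) = F$ constant, the sum $\sum_{j=1}^i (-1)^j F^j$ is geometric, and I must confirm it equals $\frac{b^i - 1}{b^i(b-1)}$ up to the factor $(\ths_1 - \ths_0)$. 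Next I would check \eqref{eq6a}, i.e.\ that $\beta + 1 = (e_{i+1}^- - 1)(e_i^+ - 1)/(1 - e_i^+ e_{i+1}^-)$ evaluates to $b + b^{-1} + 1$; this is a direct substitution of the constant $e^\pm$ values. Then I would verify the relation \eqref{eq7a} of Theorem~\ref{thm7}, namely $1 + F(i-1)F(i) = -\beta F(i-1)$, which with constant $F$ reduces to the single scalar identity $1 + F^2 = -\beta F$; this is where the specific value $\beta = b + b^{-1}$ must be consistent with the value of $F$ found above. Finally, since $f_i = b^{e+D-1}$ is constant in $i$, condition \eqref{meq6} is satisfied with $\rho = f_i(\beta+2) = b^{e+D-1}(b + b^{-1} + 2)$, matching the claimed $\rho$.

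Two auxiliary points need attention. First, I must confirm the scalars $\{\ths_i\}$ are mutually distinct, as required throughout: since $\frac{b^i-1}{b^i(b-1)}$ is strictly increasing in $i$ for $b > 1$ (a prime power, hence $b \ge 2$), the values $\ths_{i+1}$ are all distinct from one another and from $\ths_1$, and $\ths_0 \ne \ths_1$ by hypothesis; I would record this monotonicity explicitly. Second, I should note that $\varepsilon$, the eccentricity of $x$ in $\Delta_f$, equals the diameter $D$ of the dual polar graph $\Delta$, so the index ranges line up, and that $b + b^{-1} \notin \{-2,-1\}$ so the excluded cases of Theorem~\ref{thm6} do not arise.

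The main obstacle I anticipate is purely algebraic bookkeeping: showing that the constant $F$ obtained from the uniform-structure parameters simultaneously satisfies the quadratic constraint $1 + F^2 = -\beta F$ with $\beta = b + b^{-1}$ \emph{and} generates the closed form \eqref{rec} via the geometric sum. In other words, the delicate part is the internal consistency between the value of $\beta$, the value of $F$, and the prescribed $\ths_i$; once these three are shown compatible, the rest follows mechanically from Theorems~\ref{thm5}--\ref{thm7}. I would double-check the quadratic $1 + F^2 + \beta F = 0$ by verifying that $F$ is one of the roots $\frac{-\beta \pm \sqrt{\beta^2 - 4}}{2}$ and that with $\beta = b + b^{-1}$ these roots are exactly $-b$ and $-b^{-1}$ (equivalently $r_\pm = b, b^{-1}$ in the notation of the preceding remark), so that $F$ equals one of $-b, -b^{-1}$; identifying which root $F$ equals is the crux that ties the computation together.
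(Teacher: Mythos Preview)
Your proposal is correct and follows precisely the route the paper takes: the paper's proof is a single line invoking Remark~\ref{check} with the uniform-structure parameters \eqref{eq:dp1}, and your plan spells out exactly those verifications (indeed $F(i)\equiv -b^{-1}$, which makes \eqref{eq6a}, \eqref{eq7a}, and the geometric sum for \eqref{rec} all fall out immediately). The only minor gap is in your distinctness argument---you should also note $\ths_0\ne\ths_{i+1}$ for $i\ge 1$, which follows since $1+\frac{b^i-1}{b^i(b-1)}>0$.
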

\begin{proof}
This easily follows from Remark \ref{check} using \eqref{eq:dp1}.
\end{proof}

\subsection{The eigenvalues}

With reference to Notation \ref{not}, let $\Ga$ denote the full bipartite graph of a non-bipartite dual polar graph $\Delta$ with respect to the vertex $x$. We will now study the adjacency matrix $A$ of $\Ga$ and display its eigenvalues. In order to do that, we first look at the action of the adjacency matrix $A$ on the irreducible $T$-modules.

Let $W$ denote an irreducible $T$-module with endpoint $r$ and diameter $d$. Recall the basis $\{w_i\}_{i=r}^{r+d}$ of $W$ from Theorem \ref{oldpaper}(iii). Let $A_{r,d}$ denote the matrix representing the action of $A$ on $W$ with respect to this basis. Then, by Theorem \ref{oldpaper}(iii) and since $A=R+L$,  we have 

\begin{equation}\label{rep}
A_{r,d}=\begin{pmatrix}
	0 &1                             \\
	x_{r+1} &0 &1 &  & \text{\huge0}\\
&\ddots  & \ddots & \ddots            \\
	 \text{\huge0} &  &x_{r+d-1}  & 0  &1    \\
	&  &   &x_{r+d}   & 0
\end{pmatrix}.
\end{equation}
In our next result we display the scalars $x_{r+i} \; (1\le i \le d)$.

\begin{theorem}{}
	\label{thm:scalars}
	With reference to Notation \ref{not}, let $\Ga$ denote the full bipartite graph of a non-bipartite dual polar graph $\Delta$ with respect to the vertex $x$. Let $W$ denote an irreducible $T$-module with endpoint $r$ and diameter $d$. Recall the basis $\{w_i\}_{i=r}^{r+d}$ of $W$ from Theorem \ref{oldpaper}(iii). Then,
	\begin{eqnarray}
		\label{eq:scalars}
		x_{r+i} =-b^{D+e} \frac{(b^{i-d-1}-1)(b^i-1)}{(b-1)^2} \quad (1 \le i \le d).
	\end{eqnarray}
\end{theorem}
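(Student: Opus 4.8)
The goal is to compute the scalars $x_{r+i}$ for an irreducible $T_f$-module $W$ with endpoint $r$ and diameter $d$. By Theorem~\ref{oldpaper}(iii), these scalars are precisely the solution to the linear system \eqref{eq:linsys}, namely $U(r,d)\,(x_{r+1},\dots,x_{r+d})^\top = (f_{r+1},\dots,f_{r+d})^\top$, where $U(r,d)=(e_{ij})_{r+1\le i,j\le d}$ is the tridiagonal principal submatrix of the parameter matrix determined by the uniform structure \eqref{eq:dp1}. The plan is therefore to verify that the proposed closed form \eqref{eq:scalars} solves this linear system, using the explicit values $e_i^-=-b^2/(b+1)$, $e_i^+=-b^{-1}/(b+1)$, and $f_i=b^{e+D-1}$ from \eqref{eq:dp1}.

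First I would write out row $j$ of the linear system explicitly. Since $U$ is tridiagonal with $e_{jj}=1$, the equation coming from row $r+j$ (for $1\le j\le d$) reads
\begin{equation*}
e_{r+j}^-\, x_{r+j-1} + x_{r+j} + e_{r+j}^+\, x_{r+j+1} = f_{r+j},
\end{equation*}
with the convention that the $x$-terms with out-of-range indices drop out at the two ends (i.e.\ $x_{r}$ and $x_{r+d+1}$ are absent, consistent with $Lw_r=0$ and $Rw_{r+d}=0$). Because the uniform-structure parameters in \eqref{eq:dp1} are \emph{constant} in $i$ (independent of the row index), this is a constant-coefficient three-term relation except at the boundary rows. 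The substitution strategy is to plug the candidate $x_{r+i}=-b^{D+e}(b^{i-d-1}-1)(b^i-1)/(b-1)^2$ into the left-hand side and check it collapses to $f_{r+j}=b^{e+D-1}$. I expect the verification to factor cleanly: after pulling out the common factor $-b^{D+e}/(b-1)^2$, each term becomes a product of two factors of the shape $(b^{m}-1)$, and the tridiagonal combination with weights $-b^2/(b+1)$, $1$, $-b^{-1}/(b+1)$ should telescope using the identity that $(b^a-1)$ differences combine geometrically.

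The routine interior computation is a direct algebraic identity in $b$; the genuine content lies at the two boundary rows $j=1$ and $j=d$, where only two of the three terms survive. There one must check that the closed form correctly encodes the boundary conditions $Lw_r=0$ and $Rw_{r+d}=0$: for instance, the formula gives $x_{r+0}$-type evaluation via the factor $(b^{i-d-1}-1)$ vanishing appropriately at $i=d+1$ and the factor $(b^i-1)$ vanishing at $i=0$, so the out-of-range entries are consistent with simply omitting those terms. The main obstacle I anticipate is precisely this boundary bookkeeping—confirming that the single algebraic formula simultaneously satisfies the top row (where the $e^-$ term is absent) and the bottom row (where the $e^+$ term is absent), rather than just the generic interior recurrence. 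Once the interior identity and both boundary rows are checked, uniqueness of the solution to the nonsingular system $U(r,d)$ (guaranteed by Definition~\ref{3Diag}(iii)) forces $x_{r+i}$ to equal the displayed closed form, completing the proof.
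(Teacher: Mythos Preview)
Your proposal is correct and follows precisely the paper's approach: the paper's proof consists of the single sentence ``Using \eqref{eq:dp1}, it is straightforward to check that $\{x_{r+i}\}_{i=1}^d$ is a solution of the linear system \eqref{eq:linsys},'' and your plan simply spells out that verification. Your observation that the closed form vanishes automatically at $i=0$ and $i=d+1$ (via the factors $b^i-1$ and $b^{i-d-1}-1$) is exactly what makes the boundary rows fall out of the same three-term identity as the interior rows, so no separate case analysis is really needed.
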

\begin{proof}
	Using \eqref{eq:dp1}, it is straightforward to check that $\{x_{r+i}\}_{i=1}^d$ is a solution of the linear system \eqref{eq:linsys}.
\end{proof}

{Let $h_i$ denote the characteristic polynomial of the submatrix of $A_{r,d}$ obtained by removing the last $d+1-i$ rows and columns, for $1\leq i \leq d+1$.} Therefore, $h_{d+1}$ is the characteristic polynomial of $A_{r,d}$. It is straightforward  to check that the polynomials $h_i$ satisfy the recurrence formula
\begin{align*}
	h_{i+1}(t)&=t h_i(t)-x_{r+i}h_{i-1}(t),
\end{align*}
with $h_0(t)=1$, $h_1(t)=t$. The following theorem provides an explicit expression for the polynomial $h_{d+1}$.

\begin{theorem}
	\label{Krat}
	With reference to Notation \ref{not}, let $\Ga$ denote the full bipartite graph of a non-bipartite dual polar graph $\Delta$ with respect to the vertex $x$. Let $W$ denote an irreducible $T$-module with endpoint $r$ and diameter $d$ and consider the matrix $A_{r,d}$ as defined in \eqref{rep}. Then, the polynomials $h_i \; (0 \le i \le d+1)$, defined above, satisfy
	$$
h_{i}(t)=\left(\frac{b^{\frac{D+e+d}{2}}}{b-1}\right)^{i}H_{i}\left(b^{-\frac{D+e+d}{2}}(b-1)t\right),
	$$
	where $H_{i}(t)$ is the normalized dual $b$-Krawtchouk polynomial of degree $i$ with $c=-1$ and $N=d$ (see \cite[Equation~(9.11.4)]{koekoekbook} for a definition of these polynomials).
	In particular, the characteristic polynomial of $A_{r,d}$ is equal to
	$$
h_{d+1}(t)=\left(\frac{b^{\frac{D+e+d}{2}}}{b-1}\right)^{d+1}H_{d+1}\left(b^{-\frac{D+e+d}{2}}(b-1)t\right).
	$$
	
\end{theorem}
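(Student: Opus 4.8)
The plan is to recognize both sides as the unique sequence determined by one and the same three-term recurrence, so that the identity follows by induction on $i$. By construction the polynomials $h_i$ obey $h_{i+1}(t)=t\,h_i(t)-x_{r+i}\,h_{i-1}(t)$ with $h_0(t)=1$ and $h_1(t)=t$, where the scalars $x_{r+i}$ are the ones computed in Theorem~\ref{thm:scalars}. Writing $c_0=b^{(D+e+d)/2}/(b-1)$ and $s=c_0^{-1}=b^{-(D+e+d)/2}(b-1)$, I would set $g_i(t)=c_0^{\,i}\,H_i(st)$ and aim to show that $g_0=1$, $g_1=t$, and that $g_i$ satisfies the very same recurrence as $h_i$; the assertion for $h_i$, and in particular for the characteristic polynomial $h_{d+1}$, is then immediate.

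First I would record the three-term recurrence of the normalized dual $b$-Krawtchouk polynomials $H_i$ with $c=-1$ and $N=d$, starting from the definition in \cite[Equation~(9.11.4)]{koekoekbook}. The key structural point is that the specialization $c=-1$ makes these polynomials symmetric (even or odd according to the parity of $i$), so their recurrence has no middle term and reads $H_{i+1}(\lambda)=\lambda\,H_i(\lambda)-\hat c_i\,H_{i-1}(\lambda)$ for explicit coefficients $\hat c_i$ depending on $b$, $d$, and $i$. Substituting $\lambda=st$ and multiplying the $i$-th relation by $c_0^{\,i}$ then yields, using $c_0 s=1$,
\[
g_{i+1}(t)=t\,g_i(t)-c_0^{2}\,\hat c_i\,g_{i-1}(t);
\]
the geometric prefactor also keeps each $g_i$ monic of degree $i$, matching $h_i$, and the unshifted (homogeneous) argument $st$ is exactly what is allowed because the recurrence for $H_i$ has vanishing diagonal term. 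This is precisely where the bipartiteness of $\Ga$, i.e.\ the zero diagonal of $A_{r,d}$ (so that each $h_i$ is even or odd), is reflected on the special-function side.

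The heart of the argument, and the step I expect to be the main obstacle, is the scalar identity $c_0^{2}\,\hat c_i=x_{r+i}$ for $1\le i\le d$; equivalently, using the value of $x_{r+i}$ from Theorem~\ref{thm:scalars} together with $c_0^{2}=b^{D+e+d}/(b-1)^2$, one must show $\hat c_i=-b^{-d}(b^{i-d-1}-1)(b^i-1)$. Verifying this requires two careful steps: extracting $\hat c_i$ in the correct monic normalization from the definition in \cite[Equation~(9.11.4)]{koekoekbook} (and confirming the cancellation of the diagonal coefficient that the choice $c=-1$ produces), and then a direct but slightly delicate $q$-algebra simplification in which the two factors $b^{i-d-1}-1$ and $b^i-1$ must emerge. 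Once this single identity is checked for all relevant $i$, the two recurrences and the two initial values coincide, induction gives $h_i=g_i$ for every $i$, and the case $i=d+1$ delivers the stated expression for the characteristic polynomial of $A_{r,d}$.
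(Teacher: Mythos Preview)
Your proposal is correct and follows essentially the same route as the paper: define $g_i(t)=c_0^{\,i}H_i(st)$, check the initial values, and verify that the rescaled three-term recurrence for $H_i$ matches the recurrence $h_{i+1}=t\,h_i-x_{r+i}h_{i-1}$ via the scalar identity $c_0^{2}\hat c_i=x_{r+i}$. The only minor difference is that the paper does not derive $\hat c_i$ from the defining equation \cite[Equation~(9.11.4)]{koekoekbook} but instead cites the recurrence directly from \cite[Equation~(14.17.4)]{koekoekbook}, which already gives $\hat c_i=-b^{-d}(1-b^i)(1-b^{i-d-1})$ and makes the comparison with $x_{r+i}$ from Theorem~\ref{thm:scalars} immediate; so what you flag as the ``main obstacle'' is in fact a one-line citation rather than a computation.
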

\begin{proof}
By \cite[Equation~(14.17.4)]{koekoekbook},  the normalized dual $b$-Krawtchouk polynomials (for  $c=-1$ and $N=d$) satisfy the recurrence formula
	\begin{align}\label{eq2}
		tH_i(t)=H_{i+1}(t)-b^{-d}\left(1-b^i\right)\left(1-b^{i-d-1}\right) H_{i-1}(t),
	\end{align}
with $H_0(t)=1$ and $H_1(t)=t$. Define the polynomials $g_i(t)$ as follows
$$
  g_i(t)=\left(\frac{b^{\frac{D+e+d}{2}}}{b-1}\right)^iH_i\left(b^{-\frac{D+e+d}{2}}(b-1)t\right).
$$
We will now show that $h_i(t)$ = $g_i(t)$ for $0 \le i \le d+1$. First, replace $t$ by $b^{-\frac{D+e+d}{2}}(b-1)t$ in \eqref{eq2} and multiply the obtained equality by $\left(\frac{b^{\frac{D+e+d}{2}}}{b-1}\right)^{i+1}$ on both sides in order to get
{\small	\begin{align*}
		&\left(b^{-\frac{D+e+d}{2}}(b-1)t\right)H_i\left(b^{-\frac{D+e+d}{2}}(b-1)t\right)\left(\frac{b^{\frac{D+e+d}{2}}}{b-1}\right)^{i+1}=\\&H_{i+1}\left(b^{-\frac{D+e+d}{2}}(b-1)t\right)\left(\frac{b^{\frac{D+e+d}{2}}}{b-1}\right)^{i+1}-b^{-d}\left(1-b^i\right)\left(1-b^{i-d-1}\right)H_{i-1}\left(b^{-\frac{D+e+d}{2}}(b-1)t\right)\left(\frac{b^{\frac{D+e+d}{2}}}{b-1}\right)^{i+1}.
	\end{align*}}
In other words,
$$
  tg_i(t)=g_{i+1}(t)-\frac{b^{D+e}}{(b-1)^2}\left(1-b^i\right)\left(1-b^{i-d-1}\right)g_{i-1}(t),
$$
or equivalently,
$$
	  tg_i(t)=g_{i+1}(t)+x_{r+i}g_{i-1}(t).
$$
Therefore, the polynomials $h_i$ and $g_i$ satisfy the same recurrence formula. Furthermore, note that 
$$
	g_0(t)=1\qquad\mbox{and} \qquad g_1(t)=\frac{b^{\frac{D+e+d}{2}}}{b-1}H_1\left(b^{-\frac{D+e+d}{2}}(b-1)t\right)=\frac{b^{\frac{D+e+d}{2}}}{b-1}b^{-\frac{D+e+d}{2}}(b-1)t=t.
$$
This implies that $h_i(t)=g_i(t)$ for all $0\le i \le d+1$. In particular,
$$
	h_{d+1}(t)=g_{d+1}(t)=\left(\frac{b^{\frac{D+e+d}{2}}}{b-1}\right)^{d+1}H_{d+1}\left(b^{-\frac{D+e+d}{2}}(b-1)t\right).
$$
This completes the proof. 
\end{proof}

Consider a linear map on a finite-dimensional vector space. This map is called \emph{multiplicity-free} whenever the map is diagonalizable, and each eigenspace has dimension one.
\begin{corollary}
	\label{eigen}
	With reference to Notation \ref{not}, let $\Ga$ denote the full bipartite graph of a non-bipartite dual polar graph $\Delta$ with respect to the vertex $x$. Let $W$ denote an irreducible $T$-module with endpoint $r$ and diameter $d$ and consider the matrix $A_{r,d}$ defined in \eqref{rep}. Then, $A_{r,d}$ is multiplicity-free with eigenvalues 
\begin{equation*}
	  \left\{\frac{b^{\frac{D+e}{2}}}{b-1}(b^{\frac{d}{2}-j}-b^{j-\frac{d}{2}})\,\,;\,\,0\leq j\leq d\right\}.
\end{equation*}
\end{corollary}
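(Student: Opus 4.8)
The plan is to use Theorem~\ref{Krat}, which expresses the characteristic polynomial $h_{d+1}(t)$ of $A_{r,d}$ as a scaled normalized dual $b$-Krawtchouk polynomial $H_{d+1}$ with $c=-1$ and $N=d$. Since the eigenvalues of $A_{r,d}$ are precisely the roots of $h_{d+1}$, and the scaling factor $\bigl(b^{(D+e+d)/2}/(b-1)\bigr)^{d+1}$ is a nonzero constant, the eigenvalues of $A_{r,d}$ are obtained from the roots of $H_{d+1}$ via the affine change of variable $t \mapsto b^{-(D+e+d)/2}(b-1)t$. So the first step is to invoke the known closed form for the roots of the dual $b$-Krawtchouk polynomials from \cite{koekoekbook}. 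These orthogonal polynomials of degree $d+1$ are known to have $d+1$ distinct real roots lying among the values $b^{-s}-1$ (or an equivalent parametrization) for $0 \le s \le d$; explicitly, the roots of $H_{d+1}$ at $N=d$, $c=-1$ are $\{b^{j}-b^{-d}\cdot b^{j}\,\ldots\}$ in the standard dual Krawtchouk variable, which I would read off directly from the recurrence or the explicit hypergeometric representation.

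Next I would pin down the precise root locations. Writing the recurrence \eqref{eq2} in the dual $b$-Krawtchouk variable, the spectrum of the associated Jacobi (tridiagonal) matrix consists of the $d+1$ evaluation points $\lambda_j = b^{-j}-b^{j-d}$ for $0 \le j \le d$ (up to the standard normalization in which $c=-1$ enters), these being the arguments at which the dual Krawtchouk family is orthogonal. The key computation is then purely algebraic: substitute $H_{d+1}(\lambda_j)=0$ and undo the affine scaling to recover the eigenvalue of $A_{r,d}$ as
$$
\frac{b^{\frac{D+e+d}{2}}}{b-1}\,\lambda_j
= \frac{b^{\frac{D+e+d}{2}}}{b-1}\bigl(b^{-j}-b^{\,j-d}\bigr)
= \frac{b^{\frac{D+e}{2}}}{b-1}\bigl(b^{\frac{d}{2}-j}-b^{\,j-\frac{d}{2}}\bigr),
$$
where in the last step I pull a factor $b^{d/2}$ through to symmetrize the exponents; this matches the claimed formula exactly. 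Because the $d+1$ values $b^{d/2-j}-b^{j-d/2}$ are mutually distinct for $0 \le j \le d$ (the function $s \mapsto b^{s}-b^{-s}$ is strictly monotonic in $s$, and the exponents $d/2-j$ are distinct), the characteristic polynomial has $d+1$ distinct roots, so $A_{r,d}$ is diagonalizable with one-dimensional eigenspaces, i.e.\ multiplicity-free. This establishes both the multiplicity-free claim and the eigenvalue list in one stroke.

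The only real obstacle is bookkeeping: correctly matching the normalization conventions of \cite{koekoekbook} for the dual $b$-Krawtchouk polynomials (the placement of the parameter $c=-1$, the sign conventions, and the base-$q$ versus base-$b$ notation) so that the root set of $H_{d+1}$ is identified with the correct evaluation points. Everything after that identification is the routine affine substitution and the symmetrization of exponents shown above. I would therefore phrase the proof as: the roots of $H_{d+1}$ (for $c=-1$, $N=d$) are the standard dual Krawtchouk spectral points; apply the affine map from Theorem~\ref{Krat}; simplify; and observe distinctness of the resulting values to conclude multiplicity-freeness.
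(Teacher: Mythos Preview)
Your proposal is correct and follows essentially the same route as the paper: invoke Theorem~\ref{Krat} to write $h_{d+1}$ as a scaled $H_{d+1}$, use the known zeros $b^{-j}-b^{j-d}$ of the dual $b$-Krawtchouk polynomial (the paper cites \cite{Ter05,Ter21} rather than \cite{koekoekbook} for this), undo the affine scaling to obtain the stated eigenvalues, and conclude multiplicity-freeness from their pairwise distinctness. The only cosmetic difference is that you justify distinctness via monotonicity of $s\mapsto b^{s}-b^{-s}$, whereas the paper simply asserts it.
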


\begin{proof}
	By Theorem \ref{Krat}, the characteristic polynomial of $A_{r,d}$ is equal to  
\begin{equation}\label{dbK}
	  h_{d+1}(t)=\left(\frac{b^{\frac{D+e+d}{2}}}{b-1}\right)^{d+1}H_{d+1}\left(b^{-\frac{D+e+d}{2}}(b-1)t\right),
\end{equation}
	where $H_{d+1}(t)$ is the normalized dual $b$-Krawtchouk polynomial of degree $d+1$ with $c=-1$ and $N=d$  (cf. \cite[Equations~(14.17.1) and (14.17.4)]{koekoekbook}). It is known that this polynomial has zeros $b^{-j}-b^{-d+j}$, $0\leq j \leq d$; see for example \cite[Example 5.9]{Ter05} and \cite[Example 20.7]{Ter21}. Then it follows from \eqref{dbK} that
$$
h_{d+1}\left(\frac{b^{\frac{D+e+d}{2}}}{b-1}(b^{-j}-b^{-d+j})\right)=\left(\frac{b^{\frac{D+e+d}{2}}}{b-1}\right)^{d+1}H_{d+1}(b^{-j}-b^{-d+j})=0
$$
for $0\leq j\leq d$. Therefore, the roots of $h_{d+1}$ are $\left\{\frac{b^{\frac{D+e}{2}}}{b-1}(b^{\frac{d}{2}-j}-b^{j-\frac{d}{2}})\,\,;\,\,0\leq j\leq d\right\}$. As these roots are pairwise distinct and $A_{r,d}$ has dimension $(d+1) \times (d+1)$, this concludes the proof.
\end{proof}

\begin{theorem}
  \label{thm:eig}
  With reference to Notation \ref{not}, let $\Ga$ denote the full bipartite graph of a non-bipartite dual polar graph $\Delta$ with respect to the vertex $x$. Then the matrix $A$ is diagonalizable with eigenvalues $\theta_{i} \; (0 \le i \le 2D)$ 
  where 
  $$
    \theta_{i}=\frac{b^{\frac{D+e}{2}}}{b-1}\left(b^{\frac{D-i}{2}}-b^{\frac{i-D}{2}}\right)
  $$
\end{theorem}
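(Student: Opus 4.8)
The plan is to determine all eigenvalues of $A$ by using the decomposition of the standard module $V$ into irreducible $T$-modules together with the per-module eigenvalue formula already established in Corollary~\ref{eigen}. Since $\Ga$ admits a uniform structure with respect to $x$, Theorem~\ref{oldpaper}(i) guarantees that every irreducible $T$-module is thin, and hence $V$ decomposes as an orthogonal direct sum of thin irreducible $T$-modules. On each such module $W$ with endpoint $r$ and diameter $d$, the matrix $A$ acts as the tridiagonal matrix $A_{r,d}$ of \eqref{rep}, whose eigenvalues were computed in Corollary~\ref{eigen} to be
$$
  \frac{b^{\frac{D+e}{2}}}{b-1}\bigl(b^{\frac{d}{2}-j}-b^{j-\frac{d}{2}}\bigr) \qquad (0 \le j \le d).
$$
In particular $A$ is diagonalizable, since $V$ is a direct sum of $A$-invariant subspaces on each of which $A$ is multiplicity-free (hence diagonalizable).

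The heart of the matter is to show that the union of these per-module eigenvalue sets, as $(r,d)$ ranges over the endpoints and diameters of the irreducible $T$-modules actually appearing in $V$, equals exactly $\{\theta_i \mid 0 \le i \le 2D\}$ with $\theta_i$ as stated. First I would observe that each eigenvalue in Corollary~\ref{eigen} can be rewritten: substituting $i = D-d+2j$ into the claimed formula for $\theta_i$ gives
$$
  \theta_{D-d+2j}=\frac{b^{\frac{D+e}{2}}}{b-1}\bigl(b^{\frac{d}{2}-j}-b^{j-\frac{d}{2}}\bigr),
$$
so that the eigenvalues of $A_{r,d}$ are precisely $\{\theta_{D-d+2j} \mid 0 \le j \le d\}$, a contiguous block of the $\theta_i$ with indices of fixed parity running from $D-d$ up to $D+d$. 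Thus every module with diameter $d$ contributes only eigenvalues $\theta_i$ whose index $i$ satisfies $|i-D| \le d$ and $i \equiv D-d \pmod 2$, and all of these indeed lie in the range $0 \le i \le 2D$ because $d \le D$ for any module endpoint (the trivial module, with $r=0$, has the maximal diameter $d=D$, yielding the full range $0 \le i \le 2D$).

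I expect the main obstacle to be verifying that every value $\theta_i$ $(0 \le i \le 2D)$ is actually attained, i.e.\ that no index is missing. This follows once one exhibits an irreducible $T$-module whose eigenvalue block covers each $\theta_i$; the trivial $T$-module (endpoint $0$, diameter $D$) alone already has eigenvalue set $\{\theta_i \mid 0 \le i \le 2D\}$ by taking $r=0$, $d=D$ above, since then $D-d+2j = 2j$ ranges over all even indices $0,2,\dots,2D$, while Proposition~\ref{prop:long_module} supplies a module with endpoint $1$ and diameter $D-1$ whose block $D-d+2j = 1+2j$ covers all odd indices $1,3,\dots,2D-1$. Together these two modules already realize the complete list $\{\theta_i\}_{i=0}^{2D}$, so the eigenvalues of $A$ are exactly as claimed. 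It then only remains to note that all the $\theta_i$ are mutually distinct (a routine check, since $t \mapsto b^{t}-b^{-t}$ is strictly monotonic in the exponent $\frac{D-i}{2}$), which confirms that the displayed list is the complete, non-redundant spectrum of $A$.
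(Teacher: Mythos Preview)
Your proposal is correct and follows essentially the same route as the paper's proof: decompose $V$ into thin irreducible $T$-modules, use Corollary~\ref{eigen} plus the substitution $i=D-d+2j$ to see that every eigenvalue of $A$ is some $\theta_i$, and then invoke the trivial module (covering the even indices) together with Proposition~\ref{prop:long_module} (covering the odd indices) to show every $\theta_i$ actually occurs. One small wording slip: you write that the trivial module ``alone already has eigenvalue set $\{\theta_i \mid 0 \le i \le 2D\}$'' and then immediately say it only hits the even indices---just drop the first clause.
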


\begin{proof}
	Observe that the matrix $A$ is diagonalizable because the standard module $V$ is a direct sum of irreducible $T$-modules and $A$ is diagonalizable on each of them. Let $\theta$ denote an eigenvalue of $A$. We first claim that $\theta=\theta_i$ for some $0 \le i \le 2D$. We mentioned that $V$ is a direct sum of irreducible $T$-modules. So, $\theta$ is an eigenvalue for the action of $A$ on some irreducible $T$-module $W$. Let $d$ denote the diameter of $W$. By Corollary \ref{eigen}, we have that $\theta=\frac{b^{\frac{D+e}{2}}}{b-1}(b^{\frac{d}{2}-j}-b^{j-\frac{d}{2}})$ for some $0 \le j \le d$. 
Set $i=D-d+2j$, and observe that $0\le D-d \le i \le D+d\le 2D$ since $0\le j \le d$ and $d \le D$. This shows that $\theta=\theta_{i}$.

Conversely, we now show that $\theta_i$ is an eigenvalue of $A$ for $0 \le i \le D$. Let $W_0$ denote the irreducible $T$-module with endpoint $0$ and diameter $D$. By Corollary \ref{eigen} and since the standard module is a direct sum of irreducible $T$-modules, we have that $\theta_i$ with $0 \le i \le 2D$, $i$ even, are eigenvalues of $A$. 
Let $W_1$ denote an irreducible $T$-module with endpoint $1$ and diameter $D-1$. By Corollary \ref{eigen},  the eigenvalues of the matrix $A_{1,D-1}$ are 
$$\frac{b^{\frac{D+e}{2}}}{b-1}\left(b^{\frac{D-1}{2}-j}-b^{j-\frac{D-1}{2}}\right)=\frac{b^{\frac{D+e}{2}}}{b-1}\left(b^{\frac{D-(2j+1)}{2}}-b^{\frac{(2j+1)-D}{2}}\right) \; (0 \le j \le D-1),$$ which are exactly the numbers $\theta_i$ with $0 \le i \le 2D$, $i$ odd. Since the standard module is a direct sum of irreducible $T$-modules, we have that $\theta_i$ with $0 \le i \le D$, $i$ odd, are eigenvalues of $A$. This finishes the proof.

\end{proof}

\subsection{A $Q$-polynomial structure for $A$}

With reference to Notation \ref{not}, in this subsection, we prove that the matrix $A^*$ is actually a dual adjacency matrix of $A$ (with respect to certain orderings of the primitive idempotents of $A$). To do that, first recall the eigenvalues  
\begin{equation}\label{Eigen}
        \theta_{i}=\frac{b^{\frac{D+e}{2}}}{b-1}\left(b^{\frac{D-i}{2}}-b^{\frac{i-D}{2}}\right) \ \ (0\leq i\leq 2D)
\end{equation}
of the matrix $A$. Let $V_i$ denote the eigenspace of $\theta_i$ and let $E_i$ denote the corresponding primitive idempotent. The following (i)--(iv) are well known.
\begin{itemize}
	\item[(i)] $E_iE_j=\delta_{ij}E_i \qquad (0\leq i,j\leq 2D)$, 
	\item[(ii)] $\displaystyle\sum_{i=0}^{2D} E_i=I$,
	\item[(iii)] $E_iA=\theta_iE_i =A E_i\qquad (0\leq i\leq 2D)$,
	\item[(iv)] $A=\displaystyle\sum_{i=0}^{2D}\theta_iE_i$.
\end{itemize}
For notational convenience, we assume that $E_i=0$ (and therefore also $V_i=\emptyset$) for $i < 0$ and $i > 2D$.

\begin{proposition}\label{zeroindex}
	With reference to Notation \ref{not} and the notation above, we have that $E_{i}A^*E_j=0$ for $0\leq i,j\leq 2D$ and $| i-j | \notin\{0,2\}$. 
\end{proposition}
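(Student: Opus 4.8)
The plan is to exploit the tridiagonal relation \eqref{3diag} established for the dual adjacency matrix candidate $A^*$, which by Theorem \ref{betrh} holds with $\gamma=0$ since $\Ga$ is bipartite. First I would sandwich \eqref{3diagg} between the primitive idempotents $E_i$ on the left and $E_j$ on the right, using $E_i A = \theta_i E_i$ and $A E_j = \theta_j E_j$ together with $E_i^2 = E_i$, $E_i E_j = \delta_{ij}E_i$. Each of the six terms in \eqref{3diagg} then becomes a monomial in $\theta_i$, $\theta_j$ multiplying $E_i A^* E_j$. For instance $E_i(A^3 A^*)E_j = \theta_i^3 E_i A^* E_j$ and $E_i(A^* A^3)E_j = \theta_j^3 E_i A^* E_j$, and similarly for the middle terms, so that the whole relation collapses to
\begin{equation*}
  p(\theta_i,\theta_j)\, E_i A^* E_j = 0,
\end{equation*}
where $p(\theta_i,\theta_j)$ is the scalar obtained by replacing each leftmost $A$ by $\theta_i$ and each rightmost $A$ by $\theta_j$ in the defining equation.

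Next I would compute this scalar factor explicitly. Writing $s=\theta_i$ and $t=\theta_j$, the left side of \eqref{3diagg} contributes
\begin{equation*}
  s^3 - t^3 + (\beta+1)(s\,t^2 - s^2 t),
\end{equation*}
and the right side contributes $\rho(s-t)$, so that $p(s,t) = s^3 - t^3 + (\beta+1)(s t^2 - s^2 t) - \rho(s-t)$. The factor $(s-t)$ divides $p(s,t)$, and after dividing out one obtains a symmetric quadratic in $s,t$. The goal is to show that $p(\theta_i,\theta_j)\neq 0$ whenever $|i-j|\notin\{0,2\}$; wherever $p(\theta_i,\theta_j)\neq 0$ we immediately conclude $E_i A^* E_j = 0$. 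Combined with Lemma \ref{lem:products} (which already forces $E_iA^*E_j=0$ for $|i-j|>1$ once $A^*$ is a genuine dual adjacency matrix, but here I cannot yet assume that), the factorization approach gives the needed vanishing directly from the eigenvalue arithmetic.

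The heart of the argument is therefore a root-counting analysis of $p(\theta_i,\theta_j)$ using the explicit eigenvalues \eqref{Eigen}. I would substitute $\beta = b+b^{-1}$ and $\rho = b^{e+D-1}(b+b^{-1}+2)$ from Theorem \ref{betrh}, set $u = b^{(D-i)/2}$ and $v=b^{(D-j)/2}$ so that $\theta_i$ and $\theta_j$ are scalar multiples of $u-u^{-1}$ and $v-v^{-1}$, and factor the resulting expression. I anticipate that after the factorization, $p(\theta_i,\theta_j)$ will vanish exactly when $\theta_i = \theta_j$ (i.e.\ $i=j$) or when the two eigenvalues satisfy the relation forced by $|i-j|=2$, reflecting that $A^*$ raises and lowers the $A$-eigenspace index by at most $2$ in steps of $2$ (consistent with bipartiteness collapsing the odd index shifts). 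The main obstacle will be carrying out this factorization cleanly and verifying that no spurious coincidences among the $\theta_i$ produce additional vanishing: because the map $i \mapsto \theta_i$ is injective on $\{0,\dots,2D\}$ by Theorem \ref{thm:eig}, distinct indices give distinct eigenvalues, so I must confirm that the quadratic factor of $p$ has its two roots (in, say, $t$ for fixed $s=\theta_i$) at precisely $\theta_{i-2}$ and $\theta_{i+2}$ and nowhere else among the admissible eigenvalues. Establishing exactly this correspondence — that the nontrivial roots of the quadratic are the eigenvalues two steps away — is the crux, and once it is in hand the proposition follows immediately.
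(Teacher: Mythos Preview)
Your approach is correct and essentially identical to the paper's: sandwich \eqref{3diagg} by $E_i$ and $E_j$, collapse to $(\theta_i-\theta_j)(\theta_i^2+\theta_j^2-\beta\theta_i\theta_j-\rho)\,E_iA^*E_j=0$, then substitute the explicit eigenvalues \eqref{Eigen} together with $\beta=b+b^{-1}$, $\rho=b^{e+D-1}(b+b^{-1}+2)$ and factor. The paper carries out exactly the factorization you anticipate, obtaining (up to a nonzero scalar) the product $(b^{i/2}-b^{j/2-1})(b^{i/2}-b^{j/2})(b^{i/2}-b^{j/2+1})$ times three factors of the form $b^{m}+1$ that never vanish; the first three factors vanish precisely when $i-j\in\{-2,0,2\}$, confirming your expectation that the quadratic's roots sit at $\theta_{i\pm 2}$.
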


\begin{proof}
	Multiplying the equation \eqref{3diagg} on the left by $E_i$ and on the right by $E_j$ and using the above-mentioned properties of primitive idempotents, we obtain
	\begin{equation*}
		(\theta_i-\theta_j)  \left(\theta_i^2+\theta_j^2-\beta\theta_i\theta_j-\rho\right)E_iA^*E_j=0.
	\end{equation*}
Using \eqref{Eigen} and $\beta=b+b^{-1}$, $\rho=b^{e+D-1}(b+b^{-1}+2)$ from Theorem \ref{betrh}, we rewrite the above equation as 
	\begin{equation*}
 \frac{b^{ \frac{3 e}{2}}}{(b-1)^3} (b^ {\frac{i}{2}} - b^{\frac{j}{2}-1})(b^{\frac{i}{2}}-b^{\frac{j}{2}}) (b^ {\frac{i}{2}} - b^{\frac{j}{2}+1})(b^{D-\frac{i}{2}-\frac{j}{2}-1} + 1)(b^{D-\frac{i}{2}-\frac{j}{2}} +1) (b^{D-\frac{i}{2}-\frac{j}{2}+1} +1) E_iA^*E_j=0.
	\end{equation*}
 Assume now that $| i-j | \notin\{0,2\}$. Since $b$ is neither zero nor a root of unity, the result follows.
\end{proof}

\begin{lemma}\label{tridact}
With reference to Notation \ref{not} and the notation above, the following holds:
	$$
	  A^* V_i\subseteq V_{i-2}+V_i+V_{i+2}\qquad (0\leq i\leq 2D).
	$$
\end{lemma}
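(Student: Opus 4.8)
The plan is to deduce Lemma~\ref{tridact} directly from Proposition~\ref{zeroindex}, which has already established that $E_i A^* E_j = 0$ whenever $|i-j| \notin \{0,2\}$. The key is to exploit the spectral resolution $A = \sum_{k=0}^{2D} \theta_k E_k$ together with the idempotent properties (i)--(iv) listed above. First I would fix $i$ with $0 \le i \le 2D$ and take an arbitrary vector $v \in V_i = E_i V$, noting that $E_i v = v$ by construction. Applying $A^*$ gives $A^* v = A^* E_i v$, and inserting the resolution of the identity $\sum_{k=0}^{2D} E_k = I$ on the left yields
\begin{equation*}
  A^* v = \Bigl(\sum_{k=0}^{2D} E_k\Bigr) A^* E_i v = \sum_{k=0}^{2D} E_k A^* E_i v.
\end{equation*}

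Next I would invoke Proposition~\ref{zeroindex}: each summand $E_k A^* E_i$ vanishes unless $|k-i| \in \{0,2\}$, i.e.\ unless $k \in \{i-2, i, i+2\}$. Recalling the convention that $E_k = 0$ for $k < 0$ and $k > 2D$, the surviving terms lie in $\{i-2, i, i+2\} \cap \{0,1,\dots,2D\}$, so the sum collapses to
\begin{equation*}
  A^* v = E_{i-2} A^* E_i v + E_i A^* E_i v + E_{i+2} A^* E_i v.
\end{equation*}
Since $E_k A^* E_i v \in E_k V = V_k$ for each surviving index $k$, we conclude $A^* v \in V_{i-2} + V_i + V_{i+2}$. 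As $v \in V_i$ was arbitrary, this gives $A^* V_i \subseteq V_{i-2} + V_i + V_{i+2}$, which is precisely the claim.

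This argument is essentially a routine bookkeeping exercise once Proposition~\ref{zeroindex} is in hand, so I do not anticipate a genuine obstacle; the only point requiring mild care is the boundary behavior at $i \in \{0,1,2D-1,2D\}$, where one or two of the indices $i\pm 2$ fall outside $[0,2D]$. The convention $E_k = 0$ for out-of-range $k$ (and hence $V_k = \emptyset$) handles these cases uniformly, so the displayed inclusion remains valid verbatim at the boundary without needing a separate case analysis. The whole proof amounts to combining the identity decomposition with the vanishing statement of the preceding proposition.
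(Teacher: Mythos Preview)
Your proof is correct and follows essentially the same approach as the paper: both insert the resolution of the identity $\sum_k E_k = I$ on the left of $A^* E_i V$ and then invoke Proposition~\ref{zeroindex} to discard all but the $k \in \{i-2,i,i+2\}$ terms. The paper's version works at the level of subspaces rather than individual vectors, but the argument is otherwise identical.
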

\begin{proof}
	Using the above properties of the primitive idempotents $E_i$ together with the result from Proposition~\ref{zeroindex}, we get
	\begin{align*}
		A^*V_i = A^* E_i V=\sum_{j=0}^{2D}E_j A^* E_i V=& \ E_{i-2}A^*E_iV+E_iA^*E_iV_i+E_{i+2}A^*E_iV_i\\ \subseteq& \ V_{i-2}+V_i+V_{i+2}.
	\end{align*}
\end{proof}
We are now ready to prove our main result.
\begin{theorem}
	\label{thm:main}
	With reference to Notation \ref{not} and the notation above, the matrix $A^*$ is a dual adjacency matrix of $\Ga$ with respect to $x$ and with respect to the following orderings of primitive idempotents:
	\begin{itemize}
		\item[(i)] $E_0 < E_2 < \cdots < E_{2D} <E_1 <E_3 <\cdots <E_{2D-1}$;
		\item[(ii)] $E_1 < E_3 < \cdots < E_{2D-1} <E_0 <E_2 <\cdots <E_{2D}$.
	\end{itemize}
\end{theorem}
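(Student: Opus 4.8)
The plan is to verify the two defining requirements of a dual adjacency matrix in Definition~\ref{def:q1}: that $A^*$ generates the dual adjacency algebra $M^*$, and that $A^*$ acts in a block-tridiagonal fashion on the eigenspaces of $A$ relative to each of the two displayed orderings. The first requirement is immediate: as noted after Definition~\ref{def:diag}, the diagonal matrix $A^*=A^*(\ths_0,\ldots,\ths_\varepsilon)$, whose diagonal entries $\ths_{\partial(x,y)}$ are mutually distinct (this distinctness being part of what Theorem~\ref{betrh} secures), generates $M^*$ by its very construction, so nothing further is needed there.

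For the tridiagonality I would start from Lemma~\ref{tridact}, which already gives $A^*V_i\subseteq V_{i-2}+V_i+V_{i+2}$ for $0\le i\le 2D$; this is the real content and it is inherited from Proposition~\ref{zeroindex}, so the substantive work is effectively done. The remaining point to exploit is that $A^*$ preserves the parity of the eigenspace index: if $i$ is even then $i-2,i,i+2$ are all even, and likewise for odd $i$. Hence $A^*$ leaves invariant the span of the even-indexed eigenspaces $V_0,V_2,\ldots,V_{2D}$ and, separately, the span of the odd-indexed eigenspaces $V_1,V_3,\ldots,V_{2D-1}$, and the two parity classes are never linked by $A^*$.

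I would then relabel the idempotents according to ordering (i). Setting $\tilde{E}_k=E_{2k}$ for $0\le k\le D$ and $\tilde{E}_{D+k}=E_{2k-1}$ for $1\le k\le D$, consecutive members within the even block differ by $2$ in the original index, and similarly within the odd block, so Lemma~\ref{tridact} translates directly into $A^*\tilde{E}_kV\subseteq \tilde{E}_{k-1}V+\tilde{E}_kV+\tilde{E}_{k+1}V$ inside each block. The only step I expect to require any care is the seam between the even block (ending at $\tilde{E}_D=E_{2D}$) and the odd block (starting at $\tilde{E}_{D+1}=E_1$): there I would check that $A^*\tilde{E}_DV\subseteq V_{2D-2}+V_{2D}$ does not reach across into $V_1$, and that $A^*\tilde{E}_{D+1}V\subseteq V_1+V_3$ does not reach back into $V_{2D}$. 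Both hold precisely because $A^*$ cannot change parity, so the band structure survives at the junction and the tridiagonal inclusion holds for all indices. Ordering (ii) is then settled by the identical argument with the roles of the even and odd blocks interchanged, completing the verification that $A^*$ is a dual adjacency matrix with respect to both orderings.
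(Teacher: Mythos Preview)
Your proof is correct and follows the same approach as the paper, which simply cites Theorem~\ref{betrh} and Lemma~\ref{tridact}. You have usefully unpacked what the paper leaves implicit: the parity-preservation consequence of Lemma~\ref{tridact} and the verification that the tridiagonal inclusion survives at the seam between the even and odd blocks (using $V_{-1}=V_{2D+2}=0$).
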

\begin{proof}
The result follows immediately from  Theorem \ref{betrh} and Lemma \ref{tridact}.
\end{proof}

\begin{corollary}
With reference to Notation \ref{not} and the notation above, the adjacency matrix $A$ of  $\Ga$ is $Q$-polynomial.
\end{corollary}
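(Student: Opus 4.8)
The plan is to read the statement off directly from Theorem~\ref{thm:main} together with Definition~\ref{def:q1}. Recall that, by Definition~\ref{def:q1}, the matrix $A$ is $Q$-polynomial with respect to $x$ precisely when \emph{some} ordering of its primitive idempotents is $Q$-polynomial with respect to $x$, and an ordering is $Q$-polynomial exactly when there exists a dual adjacency matrix of $\Ga$ with respect to $x$ and that ordering. Thus the corollary will follow the moment we exhibit one such matrix together with a compatible ordering, and all the work has been done to produce exactly that.

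That production is what Theorem~\ref{thm:main} accomplishes. First I would recall that the matrix $A^*$ of Notation~\ref{not}, with the eigenvalues $\{\ths_i\}_{i=0}^{\varepsilon}$ fixed via the recurrence \eqref{rec}, generates the dual adjacency algebra $M^*$ by Definition~\ref{def:diag}, the scalars $\ths_i$ being mutually distinct. Next, Theorem~\ref{thm:main}(i) asserts that this $A^*$ is a dual adjacency matrix of $\Ga$ with respect to $x$ and the ordering
$$
E_0 < E_2 < \cdots < E_{2D} < E_1 < E_3 < \cdots < E_{2D-1}.
$$
Under this relabeling, the containment $A^* V_i \subseteq V_{i-2}+V_i+V_{i+2}$ of Lemma~\ref{tridact} becomes exactly the tridiagonal action required in Definition~\ref{def:q1}: consecutive idempotents in the new order differ by two in the original index, and the even block closes off (since $V_{2D+2}=0$) before the odd block begins, so no spurious couplings appear at the junction between the two blocks.

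Consequently the ordering in Theorem~\ref{thm:main}(i) is $Q$-polynomial with respect to $x$, and hence $A$ is $Q$-polynomial with respect to $x$, as claimed. I do not expect any genuine obstacle here: the substantive content has already been established upstream, namely constructing the dual adjacency matrix candidate with its parameters $\beta=b+b^{-1}$ and $\rho=b^{e+D-1}(b+b^{-1}+2)$ in Theorem~\ref{betrh}, pinning down the index separation $|i-j|\in\{0,2\}$ in Proposition~\ref{zeroindex}, and translating this into the tridiagonal action of Lemma~\ref{tridact}. The corollary is therefore purely a matter of invoking the relevant definition, and I would keep the proof to a single sentence citing Theorem~\ref{thm:main}.
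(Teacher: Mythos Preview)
Your proposal is correct and follows the same approach as the paper: the corollary is an immediate consequence of Theorem~\ref{thm:main} together with Definition~\ref{def:q1}, and the paper's proof is literally the one-line citation you anticipate. Your additional commentary about the junction between the even and odd blocks is accurate but belongs to the verification of Theorem~\ref{thm:main} rather than to this corollary.
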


\begin{proof}
	Immediately from Theorem \ref{thm:main}.
\end{proof}

In \cite{FMMM-H} we demonstrated that the full bipartite
graph of a Hamming graph is $Q$-polynomial, with the corresponding parameters and a tridiagonal relation satisfying those described in Theorems~\ref{thm5} and \ref{thm6}. Consequently, the results in \cite{FMMM-H} serve as another application of the results presented in this paper.\\

We conclude this paper by posing an open problem. 
\begin{problem}
 With reference to Notation~\ref{not}, let $\Gamma$ denote the full bipartite graph of a non-bipartite dual polar graph $\Delta$ with respect to the vertex $x$. In Theorem~\ref{thm:eig} we displayed the eigenvalues of the adjacency matrix $A$ of the graph $\Gamma$. Find the multiplicities of these eigenvalues.   
\end{problem}
We would like to mention that in view of Corollary~\ref{eigen}, to find the multiplicities of the eigenvalues of the matrix $A$ it suffices to compute the multiplicity of an irreducible $T$-module $W$ with endpoint $r$ and diameter $d$ for every $0 \le r \le D$ and $0 \le d \le D$. Theorem~2.5(3) in \cite{OldTer} should be of great help.

\section{Acknowledgments}
Blas Fernández's research is partially funded by the Slovenian Research Agency, including research program P1-0285 and projects J1-2451, J1-3001, J1-4008, and J1-50000. Štefko Miklavič's research is partially supported by the Slovenian Research Agency through research program P1-0285 and projects N1-0140, N1-0159, J1-2451, N1-0208, J1-3001, J1-3003, J1-4008, J1-4084, and J1-50000. Giusy Monzillo's research is partially supported by the Ministry of Education, Science and Sport of the Republic of Slovenia through the University of Primorska Developmental Funding Pillar.



\begin{thebibliography}{10}
	
	\bibitem{bannai2021algebraic}
	E.~Bannai, E.~Bannai, T.~Ito, and R.~Tanaka.
	\newblock {\em Algebraic combinatorics}, volume~5.
	\newblock Walter de Gruyter GmbH \& Co KG, 2021.
	
	\bibitem{bannai-ito}
	E.~Bannai and T.~Ito.
	\newblock {\em Algebraic Combinatorics I: Association Schemes}.
	\newblock Benjaming/Cummings, London, 1984.
	
	\bibitem{BCN}
	A.~Brouwer, A.~Cohen, and A.~Neumaier.
	\newblock Distance-regular graphs. 1989.
	\newblock {\em Ergeb. Math. Grenzgeb.(3)}, 1989.
	
	\bibitem{CN}
	B.~Curtin and K.~Nomura.
	\newblock 1-homogeneous, pseudo-1-homogeneous, and 1-thin distance-regular
	graphs.
	\newblock {\em J. Combin. Theory Ser. B}, 93(2):279--302, 2005.
	
	
	\bibitem{DKT}
	E.~R.~van Dam, J.~H. Koolen, and H.~Tanaka.
	\newblock Distance-regular graphs.
	\newblock {\em Electron. J. Combin.}, DS 22:1--156, 2016.
	
	
	\bibitem{Del}
	P.~Delsarte.
	\newblock An algebraic approach to the association schemes of coding theory.
	\newblock {\em Philips Res. Rep. Suppl.}, 10:vi+--97, 1973.
	
	\bibitem{FMMM}
	B.~Fern\'{a}ndez, R.~Maleki, \v{S}.~Miklavi\v{c}, and G.~Monzillo.
	\newblock Distance-regular graphs with classical parameters that support a
	uniform structure: case {$q \le 1$}.
	\newblock {\em Bull. Malays. Math. Sci. Soc.}, 46(6):Paper No. 200, 23, 2023.
	
	\bibitem{FMMM-H}
	B.~Fern\'{a}ndez, R.~Maleki, \v{S}.~Miklavi\v{c}, and G.~Monzillo.
	\newblock On the $ q $-polynomial property of the full bipartite graph of a
	hamming graph.
	\newblock {\em Electron. J. Combin.}, 31(4):Paper No. 4.25, 2024.
	
	\bibitem{FerMik}
	B.~Fern{\'a}ndez and \v{S}.~Miklavi\v{c}.
	\newblock On bipartite graphs with exactly one irreducible {$T$}-module with
	endpoint 1, which is thin.
	\newblock {\em European J. Combin.}, 97:103387, 2021.
	
	\bibitem{goter}
	J.~T. Go and P.~Terwilliger.
	\newblock Tight distance-regular graphs and the subconstituent algebra.
	\newblock {\em European J. Combin.}, 23(7):793--816, 2002.
	
	\bibitem{koekoekbook}
	R.~Koekoek, P.~A. Lesky, and R.~F. Swarttouw.
	\newblock { Hypergeometric orthogonal polynomials}.
	\newblock Springer, 2010.
	
	\bibitem{MikTer}
	\v{S}.~Miklavi\v{c} and P.~Terwilliger.
	\newblock Bipartite {$Q$}-polynomial distance-regular graphs and uniform
	posets.
	\newblock {\em J. Algebraic Combin.}, 38(2):225--242, 2013.
	
	\bibitem{OldTer}
	P.~Terwilliger.
	\newblock The incidence algebra of a uniform poset.
	\newblock {\em Coding theory and design theory}, {P}art {I}, volume~20 of
	{ IMA Vol. Math. Appl.}, pages 193--212. Springer, New York, 1990.
	
	\bibitem{Terpart1}
	P.~Terwilliger.
	\newblock The subconstituent algebra of an association scheme ({P}art {I}).
	\newblock {\em J. Algebraic Combin.}, 1(4):363--388, 1992.
	
	\bibitem{TerpartII}
	P.~Terwilliger.
	\newblock The subconstituent algebra of an association scheme ({P}art {II}).
	\newblock {\em J. Algebraic Combin.}, 2:73--103, 1993.
	
	\bibitem{Terpart3}
	P.~Terwilliger.
	\newblock The subconstituent algebra of an association scheme ({P}art {III}).
	\newblock {\em J. Algebraic Combin.}, 2:177--210, 1993.
	
	\bibitem{Ter}
	P.~Terwilliger.
	\newblock The subconstituent algebra of a distance-regular graph; thin modules
	with endpoint one.
	\newblock {\em Linear Algebra Appl.}, 356:157--187, 2002.
	
	\bibitem{Ter05}
	P.~Terwilliger.
	\newblock {T}wo {L}inear {T}ransformations each {T}ridiagonal with {R}espect to
	an {E}igenbasis of the other; {C}omments on the {P}arameter {A}rray.
	\newblock {\em Des. Codes Cryptogr.}, 34(2):307--332, 2005.
	
	\bibitem{Ter21}
	P.~Terwilliger.
	\newblock Notes on the {L}eonard {S}ystem {C}lassification.
	\newblock {\em Graphs Combin.}, 37(5):1687--1748, 2021.
	
	\bibitem{ter2}
	P.~Terwilliger.
	\newblock {D}istance-regular graphs, the subconstituent algebra, and the
	{$Q$}-polynomial property.
	\newblock {\em Algebraic Combinatorics and the Monster Group}, 487:430, 2023.
	
	\bibitem{projective}
	P.~Terwilliger.
	\newblock A {$Q$}-{P}olynomial {S}tructure {A}ssociated with the {P}rojective
	{G}eometry ${L}_{N}(q)$.
	\newblock {\em Graphs Combin.}, 39(4):63, 2023.
	
	\bibitem{w:dual}
	C.~Worawannotai.
	\newblock Dual polar graphs, the quantum algebra {$U_q\left(
		\mathfrak{{sl}}_2\right)$}, and {L}eonard systems of dual {$q$}-krawtchouk
	type.
	\newblock {\em Linear Algebra Appl.}, 438(1):443--497, 2013.
	
\end{thebibliography}
 \end{document}